\documentclass[10pt,reqno]{article}

\usepackage{amsthm,amsmath,amssymb,amsbsy,amsfonts,graphics,epsfig,graphicx,mathdots,cleveref}

\newcommand{\C}{{\mathbb C}}

\newcommand{\N}{{\mathbb N}}

\newcommand{\beq}{\begin{equation}}
	\newcommand{\eeq}{\end{equation}}

\newtheorem{theo}{{\bf Theorem}}[section]
\newtheorem{cor}[theo]{{\bf Corollary}}
\newtheorem{lem}[theo]{{\bf Lemma}}
\newtheorem{Prop}[theo]{{\bf Proposition}}

\newcounter{definition}
\newenvironment{defn}[1][]{\refstepcounter{definition}\par\medskip
	\noindent \textbf{Definition~\thedefinition. #1} \rmfamily}{\medskip}

\newcounter{example}
\newenvironment{ex}[1][]{\refstepcounter{example}\par\medskip
	\noindent \textbf{Example~\theexample. #1} \rmfamily}{\medskip}

\newcounter{remark}   
\newenvironment{rem}[1][]{\refstepcounter{remark}\par\medskip
	\noindent \textbf{Remark~\theremark. #1} \rmfamily}{\medskip}

\numberwithin{equation}{section}

\author{A. Bourget and T. McMillen \footnote{\textbf{Mailing address}: Department of Mathematics, California State University (Fullerton),
		McCarthy Hall 154, Fullerton CA 92834 (US). \textbf{Email address}: abourget@fullerton.edu, \mbox{tmcmillen@fullerton.edu}} \\ Department of Mathematics \\ California State University, Fullerton}

\title{Stability of Multidimensional Linear Discrete Systems}

\date{ }

\begin{document}
	
\maketitle	

\begin{abstract}
	We study linear multidimensional discrete systems of the form
\[
x_\alpha=\sum_{k=1}^m A_kx_{\alpha-e_k},
\qquad \alpha\in\mathbb N^m,
\]
with uniformly bounded boundary data prescribed on the codimension-one coordinate faces and attractivity understood in the sense of Pringsheim convergence. We first derive an explicit solution representation in terms of sums of ordered matrix products and introduce a shell decomposition whose associated sums satisfy a one-dimensional affine recursion. Our main result is a generalized Fornasini--Marchesini theorem in arbitrary dimension: asymptotic stability is equivalent to the condition
\[ \rho\!\left(z_1A_1 + \cdots + z_m A_m \right)<1 \qquad \text{for every } |z_1| = \cdots = |z_m|=1. \]
 We then study the critical spectral boundary, where the maximal toral spectral radius equals one. We prove that a nonsemisimple unimodular eigenvalue of a toral pencil forces instability, yielding necessary spectral conditions for Lyapunov stability. In the simultaneously diagonalizable case, we obtain a complete classification of asymptotically stable, neutrally stable, and unstable systems in terms of the common eigenvalues. Finally, we show that bounds involving only the individual spectral radii can fail outside this setting: even when all coefficient matrices are nilpotent and their spectral radii sum to zero, a critical system may still be unstable.
\end{abstract}

\medskip
\noindent\textbf{Keywords:} Multidimensional discrete systems; Fornasini--Marchesini models; Asymptotic stability; Lyapunov stability.

\section{Introduction}

Multidimensional linear discrete dynamical systems, or \emph{$m$D systems}, arise naturally when the evolution of a state depends on several independent discrete
directions. Examples occur in multidimensional filtering and signal processing, lattice models, iterative schemes on grids, and numerical discretizations of partial differential equations involving several spatial or spatio-temporal indices; see, for example, \cite{Bo82,Ga01,Ka85,ZhYa18}. Stability and stabilization of linear multidimensional discrete systems have also been studied extensively in the frequency domain; see, for instance, \cite{LiXuLin2013}. In such settings, the state is indexed by a multi-index $\alpha\in\mathbb N^m$, and its evolution depends on neighboring states in the coordinate directions.

A classical framework in dimension two is provided by the Fornasini--Mar\-chesini (FM) models \cite{FoMa78,FoMa80}, which play a central role in $2$D systems theory; see also the monographs \cite{Bo82,Ka85}, the survey \cite{TiDh12}, and the retrospective account \cite{GaRo18}. Stability criteria for two-dimensional discrete systems have been developed from several points of view, including spectral and generalized-eigenvalue methods \cite{FuChNi06}, behavioral approaches \cite{Valcher2000}, and frequency-domain techniques \cite{LiXuLin2013}. More recent developments include Lyapunov and LMI methods for multidimensional state-space models \cite{ElosmaniEtAl2021,KanellakisEtAl2024}. In its simplest homogeneous form, the FM recurrence is
\[ x_{i,j} = A_1x_{i-1,j}+A_2x_{i,j-1}, \qquad (i,j)\in\mathbb N^2, \]
with boundary data prescribed on the coordinate axes. A fundamental result in the $2$D theory characterizes asymptotic stability in terms of the matrix
pencil
\[ z_1A_1+z_2A_2: \]
asymptotic stability is equivalent to a strict spectral-radius condition on the unit torus, or, equivalently, to the absence of zeros of the associated determinant on the closed bidisk. The stability of this doubly indexed system under bounded boundary data was studied in detail in our earlier work \cite{BoMc26}, which provides the two-dimensional starting point for several of the questions considered here.

In this paper we study the natural $m$-dimensional extension of the homogeneous FM recurrence. Let $A_1,\ldots,A_m\in\mathcal M_n(\mathbb C)$, with $m\geq2$, and consider
\begin{equation*}
	x_\alpha = \sum_{k=1}^m A_kx_{\alpha-e_k}, \qquad \alpha\in\mathbb N^m.
\end{equation*}
Boundary data are prescribed on the open coordinate faces of codimension one,
\[ \mathcal H = \left\{ \beta\in\mathbb N_0^m: \beta\text{ has exactly one zero component} \right\}, \]
and are assumed to belong to $\ell^\infty(\mathcal H;\mathbb C^n)$. Attractivity is understood in the sense of Pringsheim convergence: $x_\alpha\to0$ as all components of
$\alpha$ tend to infinity.

The choice of admissible boundary data and of the asymptotic regime is important in multidimensional systems. In several formulations in the literature, attractivity is considered for boundary data that vanish at infinity, typically in a $c_0$-type space, together with convergence of the whole array as $|\alpha|\to\infty$; see, for example,
\cite{Ba18,Ba23,BaCl23,David2018}. A different approach, based on discrete behaviors with a distinguished time direction and spectral conditions on characteristic varieties, was developed by Oberst and Scheicher \cite{OberstScheicher2014}. The framework adopted here is different from both of these settings. We allow arbitrary bounded boundary data,
\[ c_0(\mathcal H;\mathbb C^n) \subsetneq \ell^\infty(\mathcal H;\mathbb C^n), \]
but require decay only in the Pringsheim sense. Thus our class of admissible boundary data is larger, while the asymptotic requirement is weaker. These two notions of attractivity are therefore not directly comparable. The precise boundary convention and convergence notion used throughout the paper are described in Section~2.

\medskip
\noindent
\textbf{Representation formulas and a generalized Fornasini--Marchesini theorem.}
Our first step is to develop tools that make the multidimensional recurrence more tractable. For each multi-index $\beta\in\mathbb N_0^m$, we introduce a matrix $W_\beta$ obtained by summing all ordered products, or \emph{words}, in $A_1,\ldots,A_m$ in which $A_k$ occurs exactly $\beta_k$ times. Related transition sequences occur in the two-dimensional literature; in particular, David \cite{David2018} introduces a doubly indexed sequence corresponding to the $2$D version of these word
sums. Proposition~\ref{bigform} gives an explicit representation of $x_\alpha$ in terms of the matrices $W_\beta$ and the boundary values on the
codimension-one faces.

A second useful representation is obtained by decomposing the lattice into $\ell^1$-level sets, or shells. In Proposition~\ref{ym} we show that the corresponding shell sums
\[ y_p=\sum_{\alpha\in S_p}x_\alpha \]
satisfy a one-dimensional affine recurrence whose linear part is governed by the single matrix
\[ A_1+\cdots+A_m. \]
This reduction provides a link between the multidimensional recurrence and one-dimensional spectral behavior, and it plays an important role in the instability results developed later.

Our principal result is Theorem~\ref{bigTH}, a generalized Fornasini--Marchesini theorem for the present $m$D setting. We prove that $\mathcal S(A_1,\ldots,A_m)$ is asymptotically stable if and
only if
\[ \rho(z_1A_1 + \cdots + z_m A_m)<1 \qquad \text{for every } |z_1|= \cdots =|z_m|=1.
\]
Equivalently,
\[ \rho(z_1A_1 + \cdots + z_m A_m)<1 \qquad \text{for every } |z_1| \leq 1, \ldots , |z_m|\leq 1\]
or
\[ \det(I-(z_1A_1 + \cdots+ z_m A_m))\neq0 \qquad \text{for every } |z_1| \leq 1, \ldots , |z_m|\leq 1. \]
Thus asymptotic stability is completely characterized by the multivariate pencil $z_1A_1 + \cdots + z_m A_m$. Several practical sufficient conditions follow immediately, including criteria involving induced matrix norms and the entrywise absolute values of the coefficient matrices.

\medskip
\noindent
\textbf{The critical spectral boundary.}
The generalized FM theorem completely determines the stability behavior away from the spectral boundary. If
\[
\max_{|z_1|= \cdots=|z_m|=1}\rho(z_1A_1 + \cdots + z_m A_m)<1,
\]
the system is asymptotically stable, whereas if $\rho(z_1A_1+\cdots+z_mA_m)>1$ for some $|z_1|=\cdots=|z_m|=1$, the system is unstable. The
remaining case is therefore the critical regime
\[
\max_{|z_1|=\cdots=|z_m|=1}\rho(z_1A_1+\cdots+z_mA_m)=1.
\]
Here the spectral radius alone no longer determines Lyapunov stability.

Our main result in this regime shows that the Jordan structure of the critical pencils provides an additional obstruction. In Theorem~\ref{defective-pencil} we prove that if, for some $|z_1|=\cdots=|z_m|=1$, the matrix $z_1A_1+ \cdots + z_m A_m$ has a nonsemisimple eigenvalue of modulus one, then $\mathcal S(A_1,\ldots,A_m)$ is unstable. Consequently, Lyapunov stability requires
\[ \rho(z_1 A_1+ \cdots + z_mA_m) \leq 1 \qquad \text{for every } |z_1|= \cdots = |z_m|=1, \]
and every unimodular eigenvalue of every toral pencil must be semisimple. These conditions are not sufficient in general, however: even for commuting coefficient matrices there are systems satisfying both conditions that are nevertheless unstable. Thus, the spectral-radius bound together with semisimplicity of the unimodular eigenvalues of the toral pencils is not sufficient to characterize Lyapunov stability in the critical regime.

The examples in Section~5 illustrate both possibilities on the spectral boundary. A critical system may be neutrally stable, while the presence of a defective unimodular eigenvalue can produce instability. This contrast highlights the qualitative difference between the strict stability region and its boundary.

\medskip
\noindent
\textbf{The simultaneously diagonalizable case.}
A complete classification is possible when $A_1,\ldots,A_m$ are simultaneously diagonalizable. If
\[ A_k=X\Lambda_kX^{-1}, \qquad \Lambda_k = \operatorname{diag}(a_{k,1},\ldots,a_{k,n}), \]
then the system decouples along the common eigendirections. In particular,
\[ \max_{|z_1|=\cdots=|z_m|=1}\rho(z_1 A_1 + \cdots + z_m A_m) = \max_{1\leq j\leq n} \sum_{k=1}^m|a_{k,j}|. \]
Theorem~\ref{simultaneously-diagonalizable} therefore reduces the stability problem to scalar inequalities: the system is asymptotically stable if
\[ \sum_{k=1}^m|a_{k,j}|<1 \qquad \text{for every }j, \]
neutrally stable if all of these sums are at most one and equality occurs for at least one common eigendirection, and unstable if one of the sums exceeds one.

This characterization yields a simple sufficient condition in terms of the individual spectral radii. For simultaneously diagonalizable coefficient matrices,
\[ \sum_{k=1}^m\rho(A_k)<1 \]
implies asymptotic stability, while
\[ \sum_{k=1}^m\rho(A_k)\leq1 \]
implies Lyapunov stability. The assumption of simultaneous diagonalizability is essential. We exhibit a critical system for which every coefficient matrix is nilpotent, so that
\[ \sum_{k=1}^m\rho(A_k)=0<1, \]
but the system is nevertheless unstable. In that example the instability is caused by a defective unimodular eigenvalue of a critical pencil. Thus individual spectral radii alone do not control stability for general matrix families, even under a strict inequality.

Some of the ideas developed here originate in our two-dimensional analysis \cite{BoMc26}. The present work extends the bounded-boundary framework to arbitrary dimension, develops the corresponding word and shell representations, establishes the generalized FM characterization, and identifies new obstructions that arise on the critical spectral boundary.

\medskip
\noindent
\textbf{Organization.}
Section~2 introduces the notation, boundary convention, stability notions, and a basic unimodular invariance property. Section~3 develops the word-sum representation and the shell decomposition and derives a general instability criterion. Section~4 proves the generalized Fornasini--Marchesini theorem and records several consequences. Section~5 studies the critical spectral boundary, including the instability criterion for nonsemisimple unimodular eigenvalues and examples of both neutral stability and instability. Section~6 gives a complete classification for simultaneously diagonalizable coefficient families and shows that spectral-radius conditions involving the individual
matrices need not extend beyond that setting. Section~7 concludes the paper.

\section{Definitions and notation}

We denote by
\[ \mathbb D:=\{z\in\mathbb C:|z|<1\}, \qquad \overline{\mathbb D}:=\{z\in\mathbb C:|z|\leq1\}, \qquad \mathbb T:=\{z\in\mathbb C:|z|=1\}. \]
Their $m$-fold Cartesian products are denoted by $\mathbb D^m,\  \overline{\mathbb D}^{\,m},\ \mathbb T^m$, respectively.

We employ standard multi-index notation: multi-indices are denoted by Greek letters $\alpha,\beta,\ldots$, where
\begin{equation*}
\alpha=(\alpha_{1},\ldots,\alpha_{m})\in \mathbb N^{m}\ \text{or}\ \mathbb N_{0}^{m}.
\end{equation*}
Here, $\N$ denotes the set of positive integers and $\N_0$ the set of all non-negative integers. For every multi-index $\alpha=(\alpha_1,\ldots,\alpha_m)$ and
$z=(z_1,\ldots,z_m)\in\C^m$, we define
\[ |\alpha|:=\alpha_1+\cdots+\alpha_m, \qquad z^\alpha:=z_1^{\alpha_1}\cdots z_m^{\alpha_m}. \]
We say that $\alpha$ is a multi-index of order $p$ if $|\alpha|=p$.  For $k\in\{1,\ldots,m\}$, let $e_{k}\in\mathbb N_{0}^{m}$ denote the $k$-th canonical multi-index, i.e., the element with a $1$ in the $k$-th component and zeros elsewhere. We further set
\begin{equation} \label{eq:Zdef}
	\mathcal H:=\{\beta\in\mathbb N_{0}^{m}:\ \beta \text{ has exactly one zero component}\}.
\end{equation}

For example, when $m=2$,
\[ \mathcal H = \{(i,0):i\in\mathbb N\} \cup \{(0,j):j\in\mathbb N\}, \]
so that $\mathcal H$ consists of the two coordinate axes, with the origin excluded. Thus our boundary convention reduces in this case to the usual boundary data for the two-dimensional Fornasini--Marchesini model. When $m=3$,
\[ \mathcal H = \{(0,j,k):j,k\in\mathbb N\} \cup \{(i,0,k):i,k\in\mathbb N\} \cup \{(i,j,0):i,j\in\mathbb N\}, \]
so that the boundary data are prescribed on the three open coordinate faces of $\mathbb N_0^3$.

\medskip

Let $A_{1},\ldots,A_{m}\in \mathcal M_{n}(\mathbb C)$. We write $\mathcal S(A_{1},\ldots,A_{m})$ for the $m$-dimensional discrete dynamical system defined by the recurrence
\begin{equation} \label{rec rel}
	x_{\alpha}=\sum_{k=1}^{m}A_{k}\,x_{\alpha-e_{k}},\qquad \alpha\in\mathbb N^{m},
\end{equation}
together with uniformly bounded boundary data prescribed on the open coordinate hyperplanes (faces) of codimension $1$, namely,
\begin{equation} \label{bound cond}
 (x_\beta)_{\beta \in \mathcal H} \in \ell^{\infty}(\mathcal H;\mathbb C^{n}).
\end{equation}
For boundary data $x=(x_\beta)_{\beta \in \mathcal H}$, set
\begin{equation}
		\| x \|_{\infty,\mathcal H} := \sup_{\beta \in \mathcal H} \| x_\beta\|.
\end{equation}
Unless stated otherwise, any stability property of $\mathcal S(A_{1},\ldots,A_{m})$ refers to the equilibrium $x\equiv 0$. The notions of stability introduced below are adapted from the standard definitions in Strogatz \cite{st94}. 

For $A\in\mathcal M_n(\mathbb C)$, we denote its spectrum by $\sigma(A)$ and its spectral radius by
\[ \rho(A):=\max\{|\lambda|:\lambda\in\sigma(A)\}. \]
Associated with the coefficient matrices $A_1,\ldots,A_m$ is the matrix pencil
\[ P(z):=\sum_{k=1}^m z_kA_k, \qquad z=(z_1,\ldots,z_m)\in\mathbb C^m. \]

\begin{defn} \label{def:stability}
	The system $\mathcal S(A_{1},\ldots,A_{m})$ is said to be:
	\begin{enumerate}
		\item[(a)] \emph{Lyapunov stable} if for every $\varepsilon>0$ there exists $\delta>0$ such that
		\begin{equation*}
		\|x_{\alpha}\|<\varepsilon \quad \text{for all } \alpha\in\mathbb N^{m},
		\end{equation*}
		whenever $\|x \|_{\infty, \mathcal H}<\delta$.
		\item[(b)] \emph{Attractive} if there exists $\delta>0$ such that
		\begin{equation*}
		\lim_{\alpha\to\infty} x_{\alpha}=0
		\end{equation*}
		whenever $\|x \|_{\infty, \mathcal H}<\delta$.
		\item[(c)] \emph{Asymptotically stable} if it is Lyapunov stable and attractive.
		\item[(d)] \emph{Neutrally stable} if it is Lyapunov stable but not attractive.
		\item[(e)] \emph{Unstable} if it is not Lyapunov stable.
	\end{enumerate}
\end{defn}

\begin{rem} \label{boundary-remark}
	We record several comments concerning Definition~1 and the framework adopted throughout the paper.
	\begin{enumerate}
		\item The norm $\|\cdot\|$ in Definition~1 is an arbitrary norm on $\mathbb C^n$. Since all norms on $\mathbb C^n$ are equivalent, the stability notions above do not depend on this choice. When convenient, we shall use the max norm
		\[ \|x\|_\infty=\max\{|x_k|:k=1,\ldots,n\}. \]
		All matrix norms are assumed to be induced and hence submultiplicative.
		
		\item The convergence $x_\alpha\to0$ in the definition of attractivity is understood in the sense of Pringsheim: for every $\varepsilon>0$ there exists $N\in\mathbb N$ such that
		\[ \|x_\alpha\|<\varepsilon \qquad \text{whenever} \qquad \alpha_k\geq N,\quad k=1,\ldots,m. \]
		Thus this convergence concerns the interior asymptotic regime. For example, it does not require
		\[ x_{(j,1,\ldots,1)} \longrightarrow0 \qquad\text{as }j\to\infty, \]
		since the last $m-1$ coordinates remain fixed.
		
		\item The stability notions considered here are properties of the system relative to the class of admissible boundary data and the prescribed mode of convergence at infinity. In the present paper the boundary data are uniformly bounded but are not required to tend to zero along the coordinate faces, while attractivity is formulated using Pringsheim convergence. As discussed in the Introduction, this framework differs from $c_0$-type formulations in which the boundary data vanish at infinity and convergence of the whole array is required.
		
		\item Fix $k\in\{1,\ldots,m\}$. The one-dimensional dynamics generated by $A_k$ are contained in the multidimensional recurrence as a special case. Indeed, let
		\[ \mathbf 1:=(1,\ldots,1)\]
		and prescribe boundary data by
		\[ x_{\mathbf 1-e_k}=u, \qquad x_\beta=0 \quad \text{for every } \beta \in \mathcal H,\ \beta\neq\mathbf 1-e_k,\]
		where $u\in\mathbb C^n$. Consider the sequence of interior multi-indices
		\[ \alpha^{(j)} := \mathbf 1+(j-1)e_k, \qquad j\in\mathbb N.\]
		For $j=1$, the recurrence gives
		\[ x_{\alpha^{(1)}} = x_{\mathbf 1} = A_ku,\]
		since all the other boundary predecessors of $\mathbf 1$ have been assigned the value zero. More generally, for $j\geq2$, the predecessors $\alpha^{(j)}-e_\ell$, with $\ell\neq k$, belong to $\mathcal H$ and have boundary value zero. Hence
		\[ x_{\alpha^{(j)}} = A_kx_{\alpha^{(j-1)}}. \]
		It follows inductively that
		\[ x_{\alpha^{(j)}}=A_k^{\,j}u, \qquad j\in\mathbb N. \]
		Thus the dynamics of the one-dimensional system
		\[ u_{j+1}=A_ku_j \]
		occur along the $k$-th coordinate direction of the multidimensional system. In particular, if this one-dimensional system is unstable, then $\mathcal S(A_1,\ldots,A_m)$ is unstable.
\end{enumerate}		
\end{rem}

We conclude this section by recording a simple invariance property that will be used repeatedly below.

\begin{rem}\label{rotation-invariance}
The stability type of $\mathcal S(A_1,\ldots,A_m)$ is invariant under unimodular rotations of the coefficient matrices. More precisely, let
\[ z=(z_1,\ldots,z_m)\in\mathbb T^m \]
and define
\[ A_k':=z_kA_k, \qquad k=1,\ldots,m. \]
Then
\[ \mathcal S(A_1,\ldots,A_m) \quad \text{and} \quad \mathcal S(A_1',\ldots,A_m')\]
have the same stability type. Indeed, let $(x_\alpha)$ be a solution of $\mathcal S(A_1,\ldots,A_m)$ and define
\[ x_\alpha':=z^\alpha x_\alpha, \qquad z^\alpha=z_1^{\alpha_1}\cdots z_m^{\alpha_m}.\]
Then
\begin{align*}
	x_\alpha' &= z^\alpha \sum_{k=1}^m A_kx_{\alpha-e_k} 
	= \sum_{k=1}^m z_kA_k\,z^{\alpha-e_k}x_{\alpha-e_k} 
	= \sum_{k=1}^m A_k'x_{\alpha-e_k}'.
\end{align*}
Thus $(x_\alpha')$ is a solution of $\mathcal S(A_1',\ldots,A_m')$ with boundary data
\[ (x_\beta')_{\beta\in\mathcal H} = (z^\beta x_\beta)_{\beta\in\mathcal H}. \]
Since $z\in\mathbb T^m$, we have $|z^\alpha|=1$ for every $\alpha\in\mathbb N_0^m$, and therefore
\[ \|x_\alpha'\|=\|x_\alpha\|, \qquad \|x_\beta'\|=\|x_\beta\|. \]
Moreover,
\[ x_\alpha\longrightarrow0 \quad\text{in the sense of Pringsheim} \]
if and only if
\[ x_\alpha' \longrightarrow 0 \quad \text{in the sense of Pringsheim}.\]
Hence Lyapunov stability, attractivity, asymptotic stability, neutral stability, and instability are all preserved under the replacement
\[ A_k\longmapsto z_kA_k, \qquad k=1,\ldots,m.\]
\end{rem}

\section{Preliminary results} 

Before we start the stability analysis, it is useful to have an explicit ``unrolled'' representation of the solution of the multi-index recurrence \eqref{rec rel} in terms of the boundary data. The idea is to trace a value $x_\alpha$ backwards to the boundary by repeatedly applying the recursion, keeping track of how many times each matrix $A_k$ is used along a path. This naturally leads to sums of ordered products of the matrices---i.e., ``words'' in the alphabet $\{A_1,\dots,A_m\}$---grouped according to their multiplicities. 

The next proposition makes this combinatorial structure precise by introducing the regions $R_i(\alpha)$ that capture the last step taken in the $i$-th direction and the quantities $W_\beta$, which collect all words in which $A_k$ appears exactly $\beta_k$ times. In the two-dimensional case, analogous word sums and solution representations were used in~\cite{BoMc26}. The formulation below extends this construction to arbitrary dimension and to boundary data prescribed on the codimension-one faces.

For $\beta=(\beta_1,\ldots,\beta_m)\in\mathbb N_0^m$, let
\[ \mathcal W(\beta) := \left\{ (i_1,\ldots,i_{|\beta|})\in\{1,\ldots,m\}^{|\beta|}: \#\{r:i_r=k\}=\beta_k,\quad k=1,\ldots,m \right\}.\]
We define
\[ W_\beta := \sum_{(i_1,\ldots,i_{|\beta|})\in\mathcal W(\beta)} A_{i_1}\cdots A_{i_{|\beta|}}, \]
with the convention $W_0=I$. We also set $W_\gamma=0$ whenever $\gamma\in\mathbb Z^m$ has at least one negative component.
	
Thus $W_\beta$ is the sum of all words in $A_1,\ldots,A_m$ in which $A_k$ occurs exactly $\beta_k$ times. It may be useful to illustrate the word sums $W_\alpha$ in a few simple cases. For $i\neq j$, we have
\[ W_{e_i}=A_i,\]
and
\[ W_{e_i+e_j}=A_iA_j+A_jA_i.\]
More generally, in the two-dimensional case $m=2$,
\[ W_{(2,1)}(A_1,A_2) = A_1^2A_2+A_1A_2A_1+A_2A_1^2,\]
since these are precisely the three words containing two copies of $A_1$ and one copy of $A_2$. Thus $W_\alpha$ collects all ordered products of the matrices $A_1,\ldots,A_m$ having the multiplicities prescribed by the multi-index $\alpha$.
	
Partitioning these words according to their first or last letter gives the identities
	\begin{equation}\label{word-recursion}
		W_\beta = \sum_{k=1}^m A_kW_{\beta-e_k} =\sum_{k=1}^m W_{\beta-e_k}A_k, \qquad \beta \neq 0.
	\end{equation}

For $\alpha\in\mathbb N^m$ and $i\in\{1,\ldots,m\}$, define
\begin{equation}\label{Ri-def}
	\mathcal R_i(\alpha) := \left\{ \beta\in\mathbb N_0^m: \beta_i=\alpha_i-1,\quad 0 \leq \beta_k\leq\alpha_k-1 \text{ for }k\neq i \right\}.
\end{equation}

\begin{Prop}\label{bigform}
For every $\alpha\in\mathbb N^m$, the solution of the recurrence \eqref{rec rel} satisfies
	\begin{equation}\label{x alpha}
		x_\alpha = \sum_{i=1}^m \sum_{\beta\in\mathcal R_i(\alpha)} W_\beta A_i x_{\alpha-\beta-e_i}.
	\end{equation}
\end{Prop}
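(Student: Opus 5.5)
We argue by strong induction on the order $|\alpha|$, or equivalently by repeatedly unrolling the recurrence \eqref{rec rel} backwards until every term reaches the boundary. It helps first to identify the terms in \eqref{x alpha}. For $\beta\in\mathcal R_i(\alpha)$, the index $\gamma:=\alpha-\beta-e_i$ satisfies $\gamma_i=0$ and $\gamma_k=\alpha_k-\beta_k\geq1$ for $k\neq i$. Hence $\gamma\in\mathcal H$, and the right-hand side involves only boundary data. Conversely, each $\gamma\in\mathcal H$ with $\gamma\leq\alpha$ and zero in slot $i$ determines a unique $\beta\in\mathcal R_i(\alpha)$. The formula therefore asserts that $x_\alpha$ is a sum over lattice paths from $\alpha$ down to the first boundary point they hit. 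A path whose last step before the boundary is in direction $i$ contributes $A_{j_1}\cdots A_{j_{|\beta|}}A_i$, and summing over all orderings of the $\beta_k$ interior steps produces $W_\beta A_i$.

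The cleanest rigorous route is induction using the left form of \eqref{word-recursion}. Define the right-hand side $F(\alpha)$ of \eqref{x alpha} for $\alpha\in\mathbb N^m$, and show that it satisfies the same recurrence and the same boundary behaviour. For the base case $\alpha=\mathbf 1$, each $\mathcal R_i(\mathbf 1)=\{0\}$, so $F(\mathbf 1)=\sum_i A_ix_{\mathbf 1-e_i}=x_{\mathbf 1}$. For the inductive step, write $x_\alpha=\sum_k A_kx_{\alpha-e_k}$ and split the indices $k$ into two groups. If $\alpha_k=1$, then $\alpha-e_k\in\mathcal H$ and the term is boundary data. If $\alpha_k\geq2$, then $\alpha-e_k\in\mathbb N^m$ and the induction hypothesis replaces $x_{\alpha-e_k}$ by $F(\alpha-e_k)$. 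Interchanging sums and using $\mathcal R_i(\alpha-e_k)+e_k\subseteq\mathcal R_i(\alpha)$ for $i\neq k$, together with the shift $\beta\mapsto\beta+e_k$, the coefficient of each boundary term $A_ix_{\alpha-\beta-e_i}$ becomes $\sum_k A_kW_{\beta-e_k}$. By \eqref{word-recursion} this equals $W_\beta$.

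The main obstacle is the bookkeeping at the edges of the regions, and this is the step that needs the most care. We must check that for each $\beta\in\mathcal R_i(\alpha)$, the set of $k$ with $\beta-e_k\in\mathcal R_i(\alpha-e_k)$ is exactly the set of $k$ with $\beta_k\geq1$. The missing values of $k$ are then harmless, since $W_{\beta-e_k}=0$ by the negative-index convention. Two cases require attention. First, when $k=i$ we have $\beta_i=\alpha_i-1$, and the shift gives $(\beta-e_i)_i=\alpha_i-2=(\alpha-e_i)_i-1$, which is consistent. The degenerate case $\alpha_i=1$ with $\beta=0$ is exactly the direct boundary term $A_ix_{\alpha-e_i}$, and this matches $W_0A_i$. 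Second, when $k\neq i$ and $\alpha_k=1$, the constraint $\beta_k\leq0$ forces $\beta_k=0$, so the term is again correctly excluded. With these cases verified, grouping by $(i,\beta)$ gives $x_\alpha=F(\alpha)$.

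An alternative that avoids the edge cases is to prove directly that $x_\alpha$ equals the sum over monotone lattice paths. Each full unrolling of \eqref{rec rel} expresses $x_\alpha$ as a sum, over reverse paths stopped on first hitting $\mathcal H$, of the product of the matrices along the path. We would then classify paths by their last direction $i$ and their step counts $\beta$, and count orderings to recover $W_\beta$. For the written proof we would use the inductive argument above and mention the path interpretation only as motivation.
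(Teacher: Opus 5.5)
Your proposal is correct and is essentially the paper's proof. Both argue by induction on $|\alpha|$ (the paper uses the equivalent $d(\alpha)=|\alpha|-m$), split the recurrence according to whether $\alpha_k=1$ or $\alpha_k\geq 2$, and reindex by $\beta=\gamma+e_k$ via the bijection onto $\{\beta\in\mathcal R_i(\alpha):\beta_k>0\}$. Both then collapse the coefficient using the left form of \eqref{word-recursion} and match the direct boundary terms with $\beta=0$ through $0\in\mathcal R_i(\alpha)\iff\alpha_i=1$.
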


\begin{proof}
We argue by induction on
\[ d(\alpha):=|\alpha|-m =\sum_{k=1}^m(\alpha_k-1), \]
which measures the distance of $\alpha\in\mathbb N^m$ from $\mathbf 1=(1,\ldots,1)$. If $d(\alpha)=0$, then $\alpha=\mathbf 1$. 

For each $i\in\{1,\ldots,m\}$, we have
\[ \mathcal R_i(\mathbf 1)=\{0\}. \]
Hence, since $W_0=I$,
\[ \sum_{i=1}^m \sum_{\beta\in\mathcal R_i(\mathbf 1)} W_\beta A_i x_{\mathbf 1-\beta-e_i} = \sum_{i=1}^m A_i x_{\mathbf 1-e_i} =x_{\mathbf 1}, \]
by the defining recurrence. Thus the formula holds when $d(\alpha)=0$.
	
Now let $d(\alpha)>0$ and assume that the formula holds for every $\alpha'\in\mathbb N^m$ with $d(\alpha')<d(\alpha)$. Set
\[ J_0:=\{j:\alpha_j=1\}, \qquad J_+:=\{j:\alpha_j\geq2\}. \]
Using the recurrence, we write
\begin{equation}\label{prop31-split}
	x_\alpha = \sum_{j\in J_0}A_jx_{\alpha-e_j} + \sum_{j\in J_+}A_jx_{\alpha-e_j}.
\end{equation}
If $j\in J_+$, then $\alpha-e_j\in\mathbb N^m$ and
\[ d(\alpha-e_j)=d(\alpha)-1. \]
Therefore the induction hypothesis gives
\[ x_{\alpha-e_j} = \sum_{i=1}^m \sum_{\gamma\in\mathcal R_i(\alpha-e_j)} W_\gamma A_i x_{\alpha-e_j-\gamma-e_i}. \]
Substituting this into the second sum in \eqref{prop31-split}, we obtain
\begin{align*}
	\sum_{j\in J_+}A_jx_{\alpha-e_j} &= \sum_{j\in J_+}\sum_{i=1}^m \sum_{\gamma\in\mathcal R_i(\alpha-e_j)} A_jW_\gamma A_i x_{\alpha-e_j-\gamma-e_i}.
\end{align*}
	
For each term set
\[ \beta=\gamma+e_j. \]
Then $\beta\in\mathcal R_i(\alpha)$ and $\beta_j>0$. Conversely, if $\beta\in\mathcal R_i(\alpha)$ and $\beta_j>0$, then $\alpha_j\geq2$ and
\[ \beta-e_j\in\mathcal R_i(\alpha-e_j). \]
Thus the change of variables above gives
\begin{align*}
	\sum_{j\in J_+}A_jx_{\alpha-e_j} &= \sum_{i=1}^m \sum_{\substack{\beta\in\mathcal R_i(\alpha)\\ \beta \neq 0}} \left( \sum_{\substack{1\leq j\leq m\\ \beta_j>0}}
	A_jW_{\beta-e_j} \right) A_i x_{\alpha-\beta-e_i}.
\end{align*}
By the word recursion \eqref{word-recursion},
\[ \sum_{\substack{1\leq j\leq m\\ \beta_j>0}} A_jW_{\beta-e_j} = W_\beta, \]
and hence
\begin{equation}\label{prop31-interior}
	\sum_{j\in J_+}A_jx_{\alpha-e_j} = \sum_{i=1}^m \sum_{\substack{\beta\in\mathcal R_i(\alpha)\\ \beta \neq 0}} W_\beta A_i x_{\alpha-\beta-e_i}.
\end{equation}
	
It remains to account for the first sum in \eqref{prop31-split}. Observe that
\[ 0 \in \mathcal R_i(\alpha) \quad\Longleftrightarrow\quad \alpha_i=1.\]
Consequently,
\begin{equation}\label{prop31-boundary}
	\sum_{j\in J_0}A_jx_{\alpha-e_j} = \sum_{i=1}^m \sum_{\substack{\beta\in\mathcal R_i(\alpha)\\ \beta=0}} W_\beta A_i x_{\alpha-\beta-e_i},
\end{equation}
since $W_0=I$.
	
Combining \eqref{prop31-interior} and \eqref{prop31-boundary} with \eqref{prop31-split} yields
\[ x_\alpha = \sum_{i=1}^m \sum_{\beta\in\mathcal R_i(\alpha)} W_\beta A_i x_{\alpha-\beta-e_i},\]
which is the desired formula.
\end{proof}

\medskip

To organize sums over $\mathbb{N}^m$ it is convenient to group multi-indices by their $\ell^1$-level sets (``shells''). The next proposition formalizes this shell decomposition and provides a useful way to rewrite (and estimate) multi-index sums by summing first over the shell radius and then within each shell.

For $p\in\mathbb N_0$ we define the \emph{$p$-th shell} $S_p\subset\mathbb N_0^m$ as follows. Let
\begin{equation*}
I_p:=\{\alpha\in\mathbb N^m:\ |\alpha|=p+m\}
\end{equation*}
be the interior part of the shell, and for each $k=1,\ldots,m$, let
\begin{equation*}
B_{p,k}:=\{\beta\in\mathbb N_0^m:\ \beta_k=0,\ \beta_j\ge 1\ (j\neq k),\ |\beta|=p+m\}.
\end{equation*}
Set
\begin{equation*}
B_p:=\bigcup_{k=1}^m B_{p,k},
\qquad
S_p:=I_p\cup B_p.
\end{equation*}

\begin{Prop}\label{ym}
Let $p\in\mathbb N_0$ and define the shell sum
	\begin{equation}\label{Sp}
		y_p:=\sum_{\alpha\in S_p}x_\alpha.
	\end{equation}
Then $y_p$ satisfies the one-dimensional recurrence
	\begin{equation}\label{shell-recursion}
		y_{p+1} = \left(\sum_{k=1}^m A_k\right)y_p+b_p,
	\end{equation}
where the forcing term $b_p$ depends only on the boundary data. More precisely,
	\begin{equation}\label{bp}
		b_p = \sum_{k=1}^m \left[ \sum_{\alpha\in B_{p+1,k}}x_\alpha - \sum_{\substack{j=1\\ j\neq k}}^m \sum_{\alpha\in B_{p,j}}A_kx_\alpha \right].
	\end{equation}
\end{Prop}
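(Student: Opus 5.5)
Write $y_{p+1}=\sum_{\alpha\in I_{p+1}}x_\alpha+\sum_{k=1}^m\sum_{\alpha\in B_{p+1,k}}x_\alpha$. The boundary part is already one of the two pieces of $b_p$, so all the work is in the interior sum. On $I_{p+1}$ every index satisfies the recurrence \eqref{rec rel}. Substituting it and interchanging the sums gives
\[
\sum_{\alpha\in I_{p+1}}x_\alpha=\sum_{k=1}^m A_k\sum_{\alpha\in I_{p+1}}x_{\alpha-e_k}.
\]
For fixed $k$, I will identify the set $\{\alpha-e_k:\alpha\in I_{p+1}\}$. The map $\alpha\mapsto\alpha-e_k$ is injective, so this set is
\[
\{\gamma\in\mathbb N_0^m:\ |\gamma|=p+m,\ \gamma_j\ge1\ (j\ne k),\ \gamma_k\ge0\}.
\]
Splitting according to whether $\gamma_k\ge1$ or $\gamma_k=0$, it equals $I_p\cup B_{p,k}$, a disjoint union. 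Hence
\[
\sum_{\alpha\in I_{p+1}}x_{\alpha-e_k}=\sum_{\gamma\in I_p}x_\gamma+\sum_{\gamma\in B_{p,k}}x_\gamma .
\]

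Next I compare this with $y_p$. Each $B_{p,j}$ consists of indices with exactly one zero component, namely the $j$-th, so the sets $B_{p,1},\ldots,B_{p,m}$ are pairwise disjoint and disjoint from $I_p$. Therefore
\[
\sum_{\gamma\in I_p}x_\gamma+\sum_{\gamma\in B_{p,k}}x_\gamma=y_p-\sum_{j\ne k}\sum_{\gamma\in B_{p,j}}x_\gamma .
\]
Multiplying by $A_k$, summing over $k$, and adding back the boundary part $\sum_k\sum_{B_{p+1,k}}x_\alpha$ gives exactly \eqref{shell-recursion} with $b_p$ as in \eqref{bp}. All the $x$-values appearing in $b_p$ are indexed by points of $B_p\cup B_{p+1}\subset\mathcal H$, so $b_p$ depends only on the boundary data.

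The only delicate step is the set identity $\{\alpha-e_k:\alpha\in I_{p+1}\}=I_p\sqcup B_{p,k}$, together with the disjointness of the $B_{p,j}$. Both follow directly from the definitions, so I expect no real obstacle.
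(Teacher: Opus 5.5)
Your proposal is correct and follows the paper's proof essentially step for step. Both arguments separate out the boundary part of $S_{p+1}$, apply the recurrence on $I_{p+1}$, and identify the shifted index set as the disjoint union of $I_p$ and $B_{p,k}$. Both then rewrite the result in terms of $y_p$ using the pairwise disjointness of the $B_{p,j}$.
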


\begin{proof}
By definition,
\[ S_{p+1}=I_{p+1}\cup B_{p+1}, \]
and the union is disjoint. Hence
\begin{equation}\label{yp1-split}
	y_{p+1} = \sum_{\alpha\in I_{p+1}}x_\alpha + \sum_{\alpha\in B_{p+1}}x_\alpha.
\end{equation}
For every $\alpha\in I_{p+1}$, the recurrence relation gives
\[ x_\alpha = \sum_{k=1}^m A_kx_{\alpha-e_k}. \]
Therefore,
\begin{equation}\label{interior-shell-sum}
	\sum_{\alpha\in I_{p+1}}x_\alpha = \sum_{k=1}^m A_k \sum_{\alpha\in I_{p+1}}x_{\alpha-e_k}.
\end{equation}
	
Fix $k\in\{1,\ldots,m\}$. As $\alpha$ ranges over $I_{p+1}$, the multi-index
\[ \gamma:=\alpha-e_k \]
has $|\gamma|=p+m$, satisfies $\gamma_j\geq1$ for $j\neq k$, and satisfies $\gamma_k\geq0$. Consequently,
\[ \{\alpha-e_k:\alpha\in I_{p+1}\} = I_p\cup B_{p,k}. \]
Hence
\begin{align}
	\sum_{\alpha\in I_{p+1}}x_{\alpha-e_k} &= \sum_{\gamma\in I_p}x_\gamma + \sum_{\gamma\in B_{p,k}}x_\gamma \nonumber\\
	&= y_p - \sum_{\substack{j=1\\j\neq k}}^m \sum_{\gamma\in B_{p,j}}x_\gamma,\label{shifted-shell}
\end{align}
since
\[ S_p = I_p \cup \bigcup_{j=1}^m B_{p,j}, \]
and these sets are pairwise disjoint.
	
Substituting \eqref{shifted-shell} into \eqref{interior-shell-sum}, we obtain
\begin{align*}
	\sum_{\alpha\in I_{p+1}}x_\alpha &= \sum_{k=1}^m A_ky_p - \sum_{k=1}^m \sum_{\substack{j=1\\j\neq k}}^m \sum_{\gamma\in B_{p,j}}A_kx_\gamma.
\end{align*}
Using this identity in \eqref{yp1-split} gives
\begin{align*}
	y_{p+1} &= \left(\sum_{k=1}^m A_k\right)y_p +\sum_{\alpha\in B_{p+1}}x_\alpha - \sum_{k=1}^m \sum_{\substack{j=1\\j\neq k}}^m \sum_{\alpha\in B_{p,j}}A_kx_\alpha.
\end{align*}
Since
\[ B_{p+1} = \bigcup_{k=1}^m B_{p+1,k} \]
is a disjoint union,
\[ \sum_{\alpha\in B_{p+1}}x_\alpha = \sum_{k=1}^m \sum_{\alpha\in B_{p+1,k}}x_\alpha. \]
Therefore
\[ y_{p+1} = \left(\sum_{k=1}^m A_k\right)y_p + \sum_{k=1}^m \left[ \sum_{\alpha\in B_{p+1,k}}x_\alpha - \sum_{\substack{j=1\\j\neq k}}^m
	\sum_{\alpha\in B_{p,j}}A_kx_\alpha \right], \]
which is precisely \eqref{shell-recursion}--\eqref{bp}.
\end{proof}

When $m=2$, the shell is simply
\[ S_p=\{(i,j)\in\mathbb N_0^2:i+j=p+2\}, \]
so that $y_p$ is the sum of the states along the $(p+2)$-nd anti-diagonal.

\medskip

The following corollary gives a simple and practical condition ensuring instability of the system. We will invoke this criterion repeatedly in the sequel, since it provides an efficient way to verify instability in a variety of situations.

\begin{cor} \label{unstable}
If $\rho(P(z)) > 1$ for some $z \in \mathbb{T}^m$, then ${\cal S}(A_1, \dots, A_m)$ is unstable.
\end{cor}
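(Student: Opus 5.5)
By Remark~\ref{rotation-invariance}, replacing each $A_k$ by $z_kA_k$ does not change the stability type. So we may assume $z=\mathbf 1$, i.e.\ $A:=A_1+\cdots+A_m$ satisfies $\rho(A)>1$. Fix an eigenpair $Av=\lambda v$ with $|\lambda|>1$. The plan is to use the shell recurrence of Proposition~\ref{ym} with boundary data supported on a single point of the shell $S_0$. For such data the forcing term $b_p$ vanishes for $p\geq1$, so the shell sums evolve purely by powers of $A$.

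\textbf{Step 1 (one-point data).} Fix $k$ and a point $\beta^{(k)}\in B_{0,k}$, for instance $\beta^{(k)}=\mathbf 1+e_j-e_k$ with $j\neq k$. Prescribe $x_{\beta^{(k)}}=\delta u$ and $x_\beta=0$ on the rest of $\mathcal H$. Then every $\mathbf 1-e_i$ carries zero data, so $x_{\mathbf 1}=0$ and $y_0=\delta u$. In \eqref{bp}, all sets $B_{p+1,k'}$ carry zero data, and for $p\geq1$ so do all $B_{p,j}$. Hence $b_p=0$ for $p\geq1$. For $p=0$ the only contribution is
\[ b_0=-\sum_{k'\neq k}A_{k'}\delta u, \qquad\text{so}\qquad y_1=A\delta u-(A-A_k)\delta u=\delta A_ku. \]
It follows that
\[ y_p=\delta A^{p-1}A_ku, \qquad p\geq1. \]
Moreover, for $p\geq1$ the boundary part $B_p$ carries zero data, so $y_p$ is a sum of interior values $x_\alpha$ only.

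\textbf{Step 2 (an unbounded shell sum).} For some $k$ and some $u$, the sequence $A^{p}A_ku$ must be unbounded. Otherwise the matrices $A^pA_k$ would be uniformly bounded in $p$ for every $k$, hence so would $A^{p}A=\sum_kA^pA_k$. But $A^pAv=\lambda^{p+1}v$ is unbounded, a contradiction. Fix such $k$ and $u$ with $\|u\|=1$. A short refinement, using the eigenvector structure or simply a subsequence, gives an exponentially growing subsequence of $\|A^{p}A_ku\|$ when $u$ is chosen via $v$. In fact polynomial growth would not even be needed, since mere unboundedness suffices in Step 3.

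\textbf{Step 3 (conclusion).} The shell $S_p$ has at most $C(p+m)^{m-1}$ points, so
\[ \|y_p\|\leq C(p+m)^{m-1}\sup_{\alpha\in I_p}\|x_\alpha\|. \]
This inequality alone does not force $\sup_\alpha\|x_\alpha\|=\infty$ from mere unboundedness of $y_p$. So I will use the exponential growth: choose $u$ so that $\|A^{p}A_ku\|\geq c\,r^p$ along a subsequence, for some $r>1$. This is possible because the unbounded part of $A^pA_k$ lives on generalized eigenspaces of $A$ with eigenvalues of modulus greater than $1$, which gives $\|A^{p}A_k\|\geq c\,r^{p}$ along a subsequence. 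Then $\sup_{\alpha}\|x_\alpha\|\geq c\,\delta\, r^p/(C(p+m)^{m-1})\to\infty$. Hence for every $\delta>0$ there is data of norm $\delta$ with $\sup_\alpha\|x_\alpha\|=\infty$, which contradicts Lyapunov stability.

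The main obstacle is Step 2/3: upgrading ``some $A^pA_k$ is unbounded'' to exponential growth, so that it beats the polynomial size of the shells. I would handle it via the Jordan decomposition of $A$. Write $A^p=\sum_\mu \mu^pQ_\mu(p)$ with polynomial matrix coefficients $Q_\mu(p)$. Then $A^pA_k$ bounded for all $k$ forces $Q_\mu(p)A_k\to0$ for every $|\mu|>1$. Summing over $k$ gives $Q_\mu(p)A=0$ on the generalized eigenspace of $\mu$, where $A$ is invertible, and this contradicts the eigenvector $v$.
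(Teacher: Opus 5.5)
Your proof is correct, but it goes through the shell recursion of Proposition~\ref{ym} rather than the word-sum formula of Proposition~\ref{bigform}.

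\textbf{Comparison with the paper.} The paper puts one-point data at $\mathbf 1-e_i$ and reads off $x_{\beta+\mathbf 1}=W_\beta A_iu$. Under Lyapunov stability it concludes that every $W_\beta A_i$ is uniformly bounded, hence so is every $W_\beta=\sum_i W_{\beta-e_i}A_i$. It then uses $P(\mathbf 1)^r=\sum_{|\beta|=r}W_\beta$ to get $\|P(\mathbf 1)^r\|=O(r^{m-1})$.

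You put the data at a point of $B_{0,k}$, so that $b_p=0$ for $p\geq1$, and identify the interior shell sum as $y_p=\delta A^{p-1}A_ku$. Your computation of $y_0$, $b_0$ and $y_1$ is right.

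The two arguments are the same estimate at different resolutions: summing the paper's identity over $|\beta|=p$ gives an interior shell sum equal to $A^pA_iu$. The paper's version yields the stronger by-product $\sup_\beta\|W_\beta\|<\infty$. Yours avoids word sums and reuses the mechanism of Theorem~\ref{defective-pencil}.

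\textbf{The growth step needs tidying.}
\begin{enumerate}
\item The aside in Step~2 that ``mere unboundedness suffices'' is false; you retract it yourself in Step~3.
\item The $k$ fixed in Step~2 need not give exponential growth. If $A$ has a defective unimodular eigenvalue, $A^pA_k$ can be unbounded yet only polynomially growing. So ``the unbounded part lives on eigenvalues of modulus $>1$'' is not right.
\item Your Jordan paragraph starts from ``$A^pA_k$ bounded'', so as written it only re-proves Step~2. Replacing ``bounded'' by ``subexponential'' makes it work.
\end{enumerate}

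The clean fix is one line. With $u=v/\|v\|$ we have $\sum_kA^pA_ku=\lambda^{p+1}u$, so $\max_k\|A^pA_ku\|\geq|\lambda|^{p+1}/m$. By pigeonhole, a fixed $k$ attains this for infinitely many $p$.

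Even simpler, do not fix $k$ at all. Suppose the system were Lyapunov stable, with uniform constant $C$ for unit data. Your identity gives $\|A^{p-1}A_ku\|\leq C|I_p|$ for all $k$ and all unit $u$. Summing over $k$ gives $\|A^p\|\leq mC|I_p|=O(p^{m-1})$, which contradicts $\|A^pv\|=|\lambda|^p\|v\|$. This is exactly the paper's final step.
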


\begin{proof}
By Remark~\ref{rotation-invariance}, after replacing $A_k$ by $z_kA_k$ if necessary, it is enough to consider the case
\[ \rho(A_1+\cdots+A_m)>1. \]
Since
\[ P(\mathbf 1)=A_1+\cdots+A_m, \]
the assumption becomes
\[ \rho(P(\mathbf 1))>1. \]
We argue by contradiction and assume that $\mathcal S(A_1,\ldots,A_m)$ is Lyapunov stable. By linearity and Lyapunov stability, there exists a constant $C>0$ such
that every solution corresponding to boundary data satisfying
\[ \|x\|_{\infty, \mathcal H} \leq 1 \]
satisfies
\begin{equation} \label{Lyap bound}
	\|x_\alpha\|\leq C \qquad\text{for every }\alpha\in\mathbb N^m.
\end{equation}
Indeed, apply the definition of Lyapunov stability with $\varepsilon=1$ and then use the homogeneity of the recurrence.
	
We first show that the word sums $W_\beta$ are uniformly bounded. Fix $\beta\in\mathbb N_0^m$ and $i\in\{1,\ldots,m\}$. Prescribe boundary data supported at the
single point $\mathbf 1-e_i$ by setting
\[ x_{\mathbf 1-e_i}=u,\qquad x_\gamma=0\quad (\gamma\in\mathcal H,\ \gamma\neq\mathbf 1-e_i), \]
where $\|u\|\leq1$. Applying Proposition~\ref{bigform} at $\alpha=\beta+\mathbf 1$, the only nonzero boundary contribution is the one corresponding to the $i$th face and the multi-index $\beta$. Hence
\[ x_{\beta+\mathbf 1}=W_\beta A_i u.\]
It follows from \eqref{Lyap bound} that
\begin{equation} \label{Lyap bound 2}
	\|W_\beta A_i\|\leq C \qquad \text{for every }\beta\in\mathbb N_0^m\text{ and }i=1,\ldots,m.
\end{equation}
	
For $\beta\neq0$, partitioning the words occurring in $W_\beta$ according to their final letter gives
\[ W_\beta = \sum_{\substack{1\leq i\leq m\\ \beta_i>0}} W_{\beta-e_i}A_i.\]
Therefore, by \eqref{Lyap bound 2},
\[ \|W_\beta\|\leq mC. \]
Since $W_0=I$, there exists a constant $C_0>0$ such that
\begin{equation} \label{Lyap bound 3}
	\|W_\beta\|\leq C_0 \qquad \text{for every }\beta\in\mathbb N_0^m.
\end{equation}
	
Now, for every $r \in \mathbb N$,
\[ P(\mathbf 1)^r = (A_1+\cdots+A_m)^r = \sum_{\substack{\beta\in\mathbb N_0^m\\|\beta|=r}}W_\beta.\]
Using \eqref{Lyap bound 3} and the fact that the number of multi-indices $\beta\in\mathbb N_0^m$ satisfying $|\beta|=r$ is $\binom{r+m-1}{m-1}$, we obtain
\begin{equation} \label{Lyap bound 4}
	\|P(\mathbf 1)^r\| \leq C_0\binom{r+m-1}{m-1}=O(r^{m-1}).
\end{equation}
	
On the other hand, since $\rho(P(\mathbf 1))>1$, the matrix $P(\mathbf 1)$ has an eigenvalue $\lambda$ with $|\lambda|>1$ and a corresponding nonzero eigenvector
$v$. Hence
\[ \|P(\mathbf 1)^r\| \geq \frac{\|P(\mathbf 1)^r v \|}{\|v\|} = |\lambda|^r,\]
which grows exponentially with $r$. This contradicts the polynomial bound \eqref{Lyap bound 4}. Thus $\mathcal S(A_1,\ldots,A_m)$ is not Lyapunov stable and is therefore unstable.
\end{proof}

\section{A generalized Fornasini--Marchesini theorem}

In this section we state and prove our main result, namely an extension of the classical Fornasini--Marchesini theorem to the general setting considered in this paper. Roughly speaking, the extension provides a corresponding stability/instability characterization for the multi-index (or $m$-dimensional) systems under our hypotheses, and recovers the usual Fornasini--Marchesini statement as a particular case. For $m=2$, under the bounded-boundary/Pringsheim framework used here, the corresponding result was established in~\cite{BoMc26}. Theorem~\ref{bigTH} extends this characterization to an arbitrary number of discrete directions.  Alternative approaches to the stability analysis of multidimensional discrete state-space systems have been developed using Lyapunov and linear matrix inequality methods; see, for example, Elosmani et al.~\cite{ElosmaniEtAl2021} and Kanellakis et al.~\cite{KanellakisEtAl2024}. These works address more general multidimensional state-space formulations, whereas our focus is the homogeneous Fornasini--Marchesini recurrence with uniformly bounded boundary data and Pringsheim convergence. To the best of our knowledge, this extension has not previously been established for the bounded-boundary/Pringsheim framework, and it will serve as the central tool for the applications and examples developed later on.

We now characterize asymptotic stability in terms of the matrix pencil $P(z)$ introduced in Section~2. The relevant sets are the distinguished boundary $\mathbb T^m$ and the closed polydisk $\overline{\mathbb D}^{\,m}$.

\begin{theo}[Generalized Fornasini--Marchesini theorem] \label{bigTH}
Let $A_1,\ldots,A_m\in\mathcal M_n(\mathbb C)$. The following conditions are equivalent:
\begin{enumerate}
	\item[(a)] The system $\mathcal S(A_1,\ldots,A_m)$ is asymptotically stable.
	\item[(b)] $\rho(P(z))<1 \qquad\text{for every }z\in\mathbb T^m$.
	\item[(c)] $\rho(P(z))<1 \qquad\text{for every }z\in\overline{\mathbb D}^{\,m}$.
	\item[(d)] $\det(I-P(z))\neq0 \qquad\text{for every }z\in\overline{\mathbb D}^{\,m}$.
\end{enumerate}
\end{theo}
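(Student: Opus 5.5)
The plan is to prove the cycle of purely spectral equivalences (b)$\Leftrightarrow$(c)$\Leftrightarrow$(d) first. Then (d)$\Rightarrow$(a) is obtained from the representation of Proposition~\ref{bigform} together with a generating-function estimate for the word sums $W_\beta$, and (a)$\Rightarrow$(b) from an explicit constant solution.

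\emph{Spectral equivalences.} The implications (c)$\Rightarrow$(b) and (c)$\Rightarrow$(d) are immediate, since $\rho(P(z))<1$ excludes the eigenvalue $1$.

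For (d)$\Rightarrow$(c), suppose $\rho(P(w))\ge 1$ for some $w\in\overline{\mathbb D}^{\,m}$, and pick an eigenvalue $\mu$ of $P(w)$ with $|\mu|\ge1$. By homogeneity $P(w/\mu)=\mu^{-1}P(w)$ has eigenvalue $1$. Since $w/\mu\in\overline{\mathbb D}^{\,m}$, this gives $\det(I-P(w/\mu))=0$, contradicting (d).

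For (b)$\Rightarrow$(c), I would use Vesentini's theorem: for a holomorphic matrix-valued function $f$ of one variable, $\zeta\mapsto\rho(f(\zeta))$ (indeed $\log\rho(f(\zeta))$) is subharmonic. Fix $w\in\overline{\mathbb D}^{\,m}$. Applying the maximum principle in the variable $z_1$ gives $\rho(P(w))\le\max_{|z_1|=1}\rho(P(z_1,w_2,\ldots,w_m))$. Iterating over $z_2,\ldots,z_m$ bounds $\rho(P(w))$ by $\max_{\mathbb T^m}\rho(P)$, which is $<1$ by (b) and compactness.

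\emph{(d)$\Rightarrow$(a).} By compactness and continuity of $\det(I-P(z))$, condition (d) persists on a closed polydisk of some radius $r>1$. Hence $F(z):=(I-P(z))^{-1}$ is holomorphic on a neighbourhood of that polydisk.

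For small $|z|$ we have the Neumann series $F(z)=\sum_{k\ge0}P(z)^k=\sum_{\beta}W_\beta z^\beta$, since the coefficient of $z^\beta$ in $P(z)^{|\beta|}$ is exactly $W_\beta$. By uniqueness of Taylor coefficients of holomorphic functions on a polydisk, this power series represents $F$ on the radius-$r$ polydisk. The Cauchy estimates then give $\|W_\beta\|\le M r^{-|\beta|}$, so $\sum_\beta\|W_\beta\|<\infty$ with geometrically small tails.

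Plugging this into Proposition~\ref{bigform} yields
\[
\|x_\alpha\|\le \Big(\max_i\|A_i\|\Big)\,m\sum_{\beta}\|W_\beta\|\;\|x\|_{\infty,\mathcal H},
\]
which gives Lyapunov stability. For Pringsheim attractivity, every $\beta\in\mathcal R_i(\alpha)$ has $\beta_i=\alpha_i-1$. So if all $\alpha_k\ge N$, only $\beta$ with $|\beta|\ge N-1$ occur, and $\|x_\alpha\|\le C\sum_{|\beta|\ge N-1}\|W_\beta\|\,\|x\|_{\infty,\mathcal H}\to0$ as $N\to\infty$.

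\emph{(a)$\Rightarrow$(b).} Suppose $\rho(P(z))\ge1$ for some $z\in\mathbb T^m$.
\begin{itemize}
\item If $\rho(P(z))>1$, Corollary~\ref{unstable} gives instability.
\item If $\rho(P(z))=1$, pick an eigenvalue $\lambda$ of $P(z)$ with $|\lambda|=1$. Then $\bar\lambda z\in\mathbb T^m$ and $P(\bar\lambda z)$ has eigenvalue $1$. By Remark~\ref{rotation-invariance} we may therefore assume $P(\mathbf 1)v=v$ for some $v\ne0$. The constant array $x_\alpha\equiv \varepsilon v$ on $\mathbb N_0^m$ then satisfies \eqref{rec rel}, and its boundary data are bounded with norm as small as desired. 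Since it does not tend to $0$, attractivity fails.
\end{itemize}

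\emph{Main obstacle.} I expect the main difficulty to lie in (b)$\Rightarrow$(c). The justification that the spectral radius satisfies a maximum principle requires citing Vesentini's subharmonicity result or an equivalent argument. A second technical point is the identification of $\sum_\beta W_\beta z^\beta$ with $(I-P(z))^{-1}$ on the whole enlarged polydisk, rather than just near $0$.
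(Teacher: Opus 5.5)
Your proposal is correct and follows essentially the paper's proof: the constant-solution argument together with Corollary~\ref{unstable} for (a)$\Rightarrow$(b), Vesentini's subharmonicity with an iterated one-variable maximum principle for (b)$\Rightarrow$(c), and the identification $(I-P(z))^{-1}=\sum_\beta W_\beta z^\beta$ on a polydisk of radius $r>1$ combined with Proposition~\ref{bigform} for (d)$\Rightarrow$(a). The deviations are minor. Your homogeneity argument for (d)$\Rightarrow$(c) is redundant once the cycle is closed, though it makes the spectral equivalences independent of the dynamics. Your tail-sum bound for Pringsheim convergence replaces the paper's explicit estimate $\|x_\alpha\|\le C\sum_i r^{1-\alpha_i}$.
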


\medskip

\begin{proof}
$[(a) \Rightarrow (b)]$.  If $\rho( P(z))>1$, then $x=0$ is unstable by Corollary \ref{unstable}.  If $\rho(P(z))=1$, then $P(z)$ has  an eigenvalue $\lambda$ with $|\lambda| = 1$ and corresponding eigenvector $v \neq 0$ such that 
\begin{equation*}
	P(z) v = \lambda v.
\end{equation*} 
Define new matrices $A_k'= \lambda^{-1}z_kA_k$ for $k=1,\ldots,m$. Then $A_1'+ \cdots+ A_m'$  has eigenvalue $1$ with eigenvector $v$. It follows that 
\[ x_{\alpha} = v  \qquad \forall \alpha \in \N^m \]
solves $\mathcal{S}(A_1',\ldots,A_m')$ with boundary conditions $x_{\beta}=v$ for all $\beta \in \mathcal H$. By linearity, any nonzero scalar multiple of this constant solution is also a solution. Thus, arbitrarily small bounded boundary data can produce solutions that do not converge to zero, so $x=0$ is not attractive, contradicting asymptotic stability. Therefore, $(a)$ implies $(b)$. 
	
\medskip
	
\noindent $[(b) \Rightarrow (c).]$ Assume that
\[ \rho(P(z))<1 \qquad\text{for every }z\in\mathbb T^m. \]
Consider the function
\[ u(z):=\log \rho(P(z)), \qquad z\in\mathbb C^m \]
with $\log 0 := -\infty$. 

Recall that a function on $\C^m$ is plurisubharmonic if its restriction to every complex affine line is subharmonic. By Vesentini's theorem (see \cite{ve68}, pp. 427--429), applied to the restriction of $P$ to each such line, $u$ is plurisubharmonic on $\mathbb C^m$. Hence, by the one-variable maximum principle for plurisubharmonic functions applied successively in each coordinate,
\[ \max_{z\in\overline{\mathbb D}^{\,m}}u(z) = \max_{z\in\mathbb T^m}u(z). \]
By assumption,
\[ \max_{z\in\mathbb T^m}u(z)<0.\]
Therefore, $u(z)<0$ for every $z\in\overline{\mathbb D}^{\,m}$,  which is equivalent to
\[ \rho(P(z))<1 \qquad \text{for every }z\in\overline{\mathbb D}^{\,m}. \]
This proves (c).
	
\medskip
	
\noindent $[(c) \Rightarrow (d).]$ Assume that $\rho(P(z))<1$ for every $z\in\overline{\mathbb D}^{\,m}$.  If
\[ \det(I-P(z_0))=0 \]
for some $z_0\in\overline{\mathbb D}^{\,m}$, then $1$ is an eigenvalue of $P(z_0)$. Hence
\[ \rho(P(z_0))\geq 1, \]
contradicting condition (c). Therefore
\[ \det(I-P(z))\neq0 \qquad \text{for every }z\in\overline{\mathbb D}^{\,m}. \]
This proves $(d)$.
	
\medskip
	
\noindent $[(d) \Rightarrow (a)]$. Assume that
\[ \det\!\bigl(I-P(z))\bigr)\neq 0 \qquad \text{for all } (z_1,\ldots,z_m)\in \overline{\mathbb{D}}^m . \]
Hence
\[ F(z):=\bigl(I-P(z) \bigr)^{-1} \]
is holomorphic on an open neighborhood of the closed polydisk $ \overline{\mathbb{D}}^m$. Since $\overline{\mathbb{D}}^m$ is compact, there exists $R>1$ such that $F$ is holomorphic on the open polydisk $\{z\in\mathbb{C}^m:\ |z_k|<R,\ k=1,\ldots,m\}$. Therefore $F$ admits a power-series expansion
\begin{equation} \label{Series-new}
	F(z)=\sum_{\alpha\in\mathbb{N}_0^m} C_\alpha z^\alpha
\end{equation}
which converges absolutely on $|z_k|<R$.
	
We claim that $C_\alpha=W_\alpha$ for every $\alpha\in\mathbb{N}_0^m$. Indeed, for $|z_k|$ sufficiently small we have the Neumann-series expansion
\[ F(z)=\sum_{k=0}^\infty P(z)^k. \]
Expanding each power $P(z)^k$, the coefficient of the monomial $z^\alpha=z_1^{\alpha_1}\cdots z_m^{\alpha_m}$ is precisely the sum of all words in the matrices $A_1,\ldots,A_m$ in which $A_k$ appears exactly $\alpha_k$ times, namely $W_\alpha$. Thus, in a neighborhood of the origin,
\[ F(z)=\sum_{\alpha\in\mathbb{N}_0^m} W_\alpha z^\alpha . \]
By uniqueness of the Taylor expansion of a holomorphic function, it follows that $C_\alpha = W_\alpha$ for all $\alpha$. Consequently,
\begin{equation}  \label{Series-W}
	F(z) = \sum_{\alpha\in\mathbb{N}_0^m} W_\alpha z^\alpha
\end{equation}
throughout the polydisk of convergence $|z_k|<R$.
	
Fix $r \in (1,R)$. Since the series in \eqref{Series-W} converges absolutely on the closed polytorus $\{|z_1|=\cdots=|z_m|=r\}$, we have
\begin{equation} \label{MR-new}
	M_r:=\sum_{\alpha\in\mathbb{N}_0^m} \|W_\alpha\|\,r^{|\alpha|}<\infty.
\end{equation}
In particular,
\begin{equation} \label{coef-decay}
	\|W_\alpha\|\le M_r \, r^{-|\alpha|} \qquad \forall \alpha\in\mathbb{N}_0^m.
\end{equation}
Fix $\alpha\in\mathbb{N}^m$. By Proposition~\ref{bigform},
\[ x_\alpha=\sum_{i=1}^m \sum_{\beta\in\mathcal R_i(\alpha)} W_\beta A_i x_{\alpha-\beta-e_i}.\]
For every $\beta\in\mathcal R_i(\alpha)$, the multi-index $\alpha-\beta-e_i$ belongs to $\mathcal H$, so
\[ \|x_{\alpha-\beta-e_i}\|\le \|x\|_{\infty,\mathcal H}. \]
Hence
\begin{equation} \label{basic-est}
	\|x_\alpha\| \le \|x\|_{\infty,\mathcal H} \sum_{i=1}^m \|A_i\|\sum_{\beta\in\mathcal R_i(\alpha)} \|W_\beta\|.
\end{equation}
	
To prove Lyapunov stability, note that \eqref{MR-new} implies
\[ \sum_{\alpha\in\mathbb{N}_0^m}\|W_\alpha\|<\infty \]
because $r>1$. Therefore, from \eqref{basic-est},
\[ \|x_\alpha\| \le \|x\|_{\infty,\mathcal H} \Bigl(\sum_{i=1}^m \|A_i\|\Bigr) \sum_{\gamma\in\mathbb{N}_0^m}\|W_\gamma\| =: C\,\|x\|_{\infty,\mathcal H}, \]
where $C<\infty$ is independent of $\alpha$. If $C=0$, the conclusion is immediate. Otherwise, given $\varepsilon>0$, choosing $\delta=\varepsilon/C$ shows that $\|x\|_{\infty,\mathcal H}<\delta$ implies
$\|x_\alpha\|<\varepsilon$ for all $\alpha\in\mathbb{N}^m$. Thus $x\equiv 0$ is Lyapunov stable.
	
It remains to prove attractivity in the sense of Pringsheim. Using \eqref{coef-decay}, for each fixed $i$ and each $\beta\in\mathcal R_i(\alpha)$ we have $\beta_i=\alpha_i-1$, hence 
\[ \|W_\beta\| \leq M_r \, r^{-|\beta|} = M_r \, r^{1-\alpha_i}\prod_{k\neq i} r^{-\beta_k}. \] 
Summing over $\beta\in\mathcal R_i(\alpha)$, we obtain
\[ \sum_{\beta\in\mathcal R_i(\alpha)} \|W_\beta\| \leq M_r \,r^{1-\alpha_i} \prod_{k\neq i}\Bigl(\sum_{j=0}^{\alpha_k-1} r^{-j}\Bigr)
	\leq M_r \,\Bigl(\frac{r}{r-1}\Bigr)^{m-1} r^{1-\alpha_i}. \]
Substituting this into \eqref{basic-est} yields
\begin{equation} \label{Pringsheim-est}
	\|x_\alpha\| \leq M_r \Bigl(\frac{r}{r-1}\Bigr)^{m-1} \|x\|_{\infty,\mathcal H} \sum_{i=1}^m \|A_i\|\,r^{\,1-\alpha_i}.
\end{equation}
Now let $\alpha\to\infty$ in the sense of Pringsheim, i.e. $\alpha_i\to\infty$ for every $i=1,\ldots,m$. Since $r>1$, each term $r^{1-\alpha_i}\to 0$, and therefore the right-hand side of \eqref{Pringsheim-est} tends to $0$. It follows that $x_\alpha\to 0$ in the sense of Pringsheim. We have shown both Lyapunov stability and attractivity, so $\mathcal S(A_1,\ldots,A_m)$ is asymptotically stable. This proves $(d)\Rightarrow(a)$.
\end{proof}

We now derive two simple consequences of Theorem~\ref{bigTH} that yield convenient sufficient conditions for the asymptotic stability of the system $\mathcal{S}(A_1,\ldots,A_m)$. The first provides a criterion in terms of the norms of $A_1,\ldots,A_m$, valid for any induced matrix norm.

\begin{cor}
If $\|A_{1}\|+\cdots+\|A_{m}\|<1$, then $\mathcal S(A_{1},\ldots,A_{m})$ is asymptotically stable. Moreover, $\mathcal S(A_{1},\ldots,A_{m})$ is Lyapunov stable whenever $\|A_{1}\|+\cdots+\|A_{m}\|\le 1$.
\end{cor}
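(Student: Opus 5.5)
For the first assertion we apply Theorem~\ref{bigTH} through condition (b). Fix $z\in\mathbb T^m$. The spectral radius is bounded by any induced norm, and the norm is subadditive and homogeneous, so
\[ \rho(P(z))\le \|P(z)\| \le \sum_{k=1}^m |z_k|\,\|A_k\| = \sum_{k=1}^m \|A_k\| <1 . \]
Hence (b) holds, and Theorem~\ref{bigTH} gives asymptotic stability. Nothing further is needed here.

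For the Lyapunov statement under $\sum_k\|A_k\|\le 1$, Theorem~\ref{bigTH} is of no help, since we may be on the critical boundary. The representation of Proposition~\ref{bigform} is also awkward, because $\sum_\beta\|W_\beta\|$ need not converge when $\sum_k\|A_k\|=1$. Instead we prove directly, by induction, that
\[ \|x_\alpha\|\le \|x\|_{\infty,\mathcal H} \qquad \text{for every } \alpha\in\mathbb N^m. \]
We order $\alpha\in\mathbb N^m$ by $|\alpha|$, or equivalently by $d(\alpha)$ as in the proof of Proposition~\ref{bigform}. Each predecessor $\alpha-e_k$ either lies in $\mathcal H$ (when $\alpha_k=1$) or lies in $\mathbb N^m$ with smaller order. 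In both cases $\|x_{\alpha-e_k}\|\le \|x\|_{\infty,\mathcal H}$, by the boundary bound or by the induction hypothesis. The base case $\alpha=\mathbf 1$ has all predecessors in $\mathcal H$. The recurrence \eqref{rec rel} then gives
\[ \|x_\alpha\|\le \sum_{k=1}^m \|A_k\|\,\|x_{\alpha-e_k}\|\le \Bigl(\sum_{k=1}^m\|A_k\|\Bigr)\|x\|_{\infty,\mathcal H}\le \|x\|_{\infty,\mathcal H}. \]

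With this bound, $\delta=\varepsilon$ works in Definition~\ref{def:stability}(a). The norm on $\mathbb C^n$ here is the one inducing the matrix norm, and Remark~\ref{boundary-remark} makes this choice immaterial. The only point needing care is to check that the induction ordering handles mixed boundary and interior predecessors. Since every $\alpha\in\mathbb N^m$ has $\alpha-e_k\in\mathbb N^m\cup\mathcal H$, no predecessor is ever undefined, and the induction closes.
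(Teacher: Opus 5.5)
Your proposal is correct and follows essentially the same route as the paper. The first assertion uses $\rho(P(z))\le\|P(z)\|\le\sum_k\|A_k\|<1$ together with Theorem~\ref{bigTH}(b). The Lyapunov assertion uses strong induction on $|\alpha|$, with each predecessor either in $\mathcal H$ or interior of smaller order, giving $\sup_\alpha\|x_\alpha\|\le\|x\|_{\infty,\mathcal H}$ and hence $\delta=\varepsilon$.
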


\begin{proof}
Assume first that
\[ \|A_1\|+\cdots+\|A_m\|<1. \]
For every $z=(z_1,\ldots,z_m)\in\mathbb T^m$, we have
\[ \rho(P(z)) \leq \|P(z)\| = \left\|\sum_{k=1}^m z_kA_k\right\| \leq \sum_{k=1}^m \|A_k\| <1. \]
Therefore the spectral-radius condition in Theorem~\ref{bigTH} is satisfied, and the system $\mathcal S(A_1,\ldots,A_m)$ is asymptotically stable.
	
Now suppose that
\[ \|A_1\|+\cdots+\|A_m\| \leq 1.\]
Let $(x_\beta)_{\beta\in\mathcal H}$ be bounded boundary data, and set
\[ M:=  \|x\|_{\infty, \mathcal H}. \]
We claim that
\[ \|x_\alpha\|\leq M \qquad \text{for every }\alpha\in\mathbb N^m.\]
We argue by strong induction on $|\alpha|$. Let $\alpha\in\mathbb N^m$, and assume that
\[ \|x_\gamma\|\leq M \]
for every $\gamma\in\mathbb N^m$ with $|\gamma|<|\alpha|$. For each $k\in\{1,\ldots,m\}$, the predecessor $\alpha-e_k$ either belongs to $\mathcal H$ or belongs to
$\mathbb N^m$ and satisfies
\[ |\alpha-e_k|=|\alpha|-1. \]
Hence, either by the definition of $M$ or by the induction hypothesis,
\[ \|x_{\alpha-e_k}\|\leq M \qquad (k=1,\ldots,m). \]
Using the recurrence relation, we therefore obtain
\[ \|x_\alpha\| \leq \sum_{k=1}^m \|A_k\|\,\|x_{\alpha-e_k}\| \leq \left(\sum_{k=1}^m\|A_k\|\right)M \leq M. \]
Thus
\[ \sup_{\alpha\in\mathbb N^m}\|x_\alpha\| \leq \|x\|_{\infty, \mathcal H}. \]
In particular, given $\varepsilon>0$, choosing $\delta=\varepsilon$ in Definition~\ref{def:stability} proves that the system  $\mathcal S(A_1,\ldots,A_m)$ is Lyapunov stable.
\end{proof}

\medskip

The second result is formulated in terms of the entrywise absolute value of a matrix $A=[a_{ij}]\in \mathcal{M}_n(\C)$. We denote by $|A|$ the matrix whose $(i,j)$-entry is $|a_{ij}|$.

\begin{cor}
If $\rho(|A_1|+ \cdots + |A_m| ) <1$, then $\mathcal{S}(A_1,\ldots,A_m)$ is asymptotically stable. 
\end{cor}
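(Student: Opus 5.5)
The plan is to verify condition (b) of Theorem~\ref{bigTH} by a domination argument: for every $z\in\mathbb T^m$, the spectral radius of $P(z)$ is controlled by the spectral radius of the entrywise absolute value of $P(z)$, and that absolute value is dominated entrywise by $|A_1|+\cdots+|A_m|$.

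First I will record two standard facts. Write $B\le C$ for real matrices when $b_{ij}\le c_{ij}$ for all entries.
\begin{enumerate}
\item[(i)] For any $M\in\mathcal M_n(\mathbb C)$ we have $\rho(M)\le\rho(|M|)$.
\item[(ii)] If $0\le B\le C$ entrywise, then $\rho(B)\le\rho(C)$.
\end{enumerate}
Both follow from Gelfand's formula $\rho(M)=\lim_{r\to\infty}\|M^r\|^{1/r}$, used with a monotone norm such as the max-row-sum norm $\|\cdot\|_\infty$. For (i), the triangle inequality gives $|M^r|\le|M|^r$ entrywise, so $\|M^r\|_\infty\le\||M|^r\|_\infty$. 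For (ii), induction gives $0\le B^r\le C^r$ entrywise, so $\|B^r\|_\infty\le\|C^r\|_\infty$. Taking $r$-th roots and letting $r\to\infty$ yields both inequalities. I can either cite these as well known (Perron--Frobenius/Wielandt-type comparisons) or include this short Gelfand argument so the proof is self-contained.

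Next, fix $z\in\mathbb T^m$. Since $|z_k|=1$, the triangle inequality applied entrywise gives
\[ |P(z)|=\Bigl|\sum_{k=1}^m z_kA_k\Bigr|\le\sum_{k=1}^m|A_k|. \]
Combining (i) and (ii),
\[ \rho(P(z))\le\rho(|P(z)|)\le\rho\bigl(|A_1|+\cdots+|A_m|\bigr)<1. \]
This holds for every $z\in\mathbb T^m$, so condition (b) of Theorem~\ref{bigTH} is satisfied, and the theorem gives asymptotic stability.

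The only step with any substance is the comparison fact (i)--(ii). There is no compactness issue: the bound is uniform in $z$, and in any case the theorem requires only pointwise strict inequality on $\mathbb T^m$. I expect the proof itself to be short once those two inequalities are in hand.
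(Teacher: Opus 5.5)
Your proposal is correct and follows the paper's proof: for each $z\in\mathbb T^m$ you chain $\rho(P(z))\le\rho(|P(z)|)\le\rho(|A_1|+\cdots+|A_m|)<1$ and then apply condition (b) of Theorem~\ref{bigTH}. The only difference is that the paper cites Theorem 8.1.18 of Horn--Johnson for the two comparison inequalities, whereas your short Gelfand-formula argument with the monotone max-row-sum norm proves them directly and is valid.
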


\begin{proof}
For every $z \in \mathbb{T}^m$, we have by Theorem 8.1.18 in \cite{HoJo85} that 
\begin{align*}
	\rho(P(z)) & \leq \rho(  |P(z)|)   \leq  \rho( |A_1|+ \cdots + |A_m| )< 1. 
\end{align*}
It then follows by Theorem \ref{bigTH} that $\mathcal{S}(A_1,\ldots,A_m)$ is asymptotically stable. 
\end{proof}

The next corollary implies that asymptotic stability is maintained if the matrices $A_k$ are slightly perturbed. That is, if ${\cal S}(A_1, \ldots,A_m)$ is asymptotically stable, then ${\cal S}(A_1',\ldots,A_m')$ will be asymptotically stable, provided that $\|A_k-A_k'\|$ is small enough for every $k=1,\ldots,m$.

\begin{cor} \label{open}
Let 
\[ \mathcal{A} := \{ (A_1,\ldots,A_m) : \mathcal{S}(A_1, \ldots,A_m) \text{ is asymptotically stable} \} \]
Then $\mathcal A$ is  open, star-shaped, and unbounded for $n\geq 2$.
\end{cor}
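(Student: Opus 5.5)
By Theorem~\ref{bigTH}, the tuple $(A_1,\ldots,A_m)$ lies in $\mathcal A$ exactly when
\[ \max_{z\in\mathbb T^m}\rho(P(z))<1. \]
I would check the three properties directly from this characterization rather than from the dynamics.

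\emph{Openness.} Define $\Phi(A_1,\ldots,A_m):=\max_{z\in\mathbb T^m}\rho\bigl(\sum_k z_kA_k\bigr)$. The spectral radius is continuous on $\mathcal M_n(\mathbb C)$, since eigenvalues depend continuously on the matrix entries. The map $(z,A_1,\ldots,A_m)\mapsto\rho(\sum_k z_kA_k)$ is therefore jointly continuous. Because $\mathbb T^m$ is compact, the maximum over $z$ is continuous in $(A_1,\ldots,A_m)$. Then $\mathcal A=\Phi^{-1}([0,1))$ is open. If one prefers to avoid the continuity-of-maximum argument, there is an alternative. Suppose $\Phi(A)<1$, and assume openness fails. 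Then there are $A^{(j)}\to A$ and $z^{(j)}\in\mathbb T^m$ with $\rho(P^{(j)}(z^{(j)}))\ge1$. Pass to a convergent subsequence $z^{(j)}\to z$ and use continuity of $\rho$ to get $\rho(P(z))\ge1$, a contradiction. This compactness step is the only point requiring care, and it is routine.

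\emph{Star-shapedness about the origin.} For $t\in[0,1]$ and $(A_1,\ldots,A_m)\in\mathcal A$, we have
\[ \rho\Bigl(\sum_k z_k tA_k\Bigr)=t\rho(P(z))<1 \]
for every $z\in\mathbb T^m$. Hence $t(A_1,\ldots,A_m)\in\mathcal A$. The origin itself belongs to $\mathcal A$, since $\rho(0)=0$.

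\emph{Unboundedness for $n\ge2$.} I would use strictly upper triangular matrices. Take $A_k=c_kN$ with $N=E_{12}$, the matrix unit with a $1$ in entry $(1,2)$, and arbitrary scalars $c_k$. Then $P(z)=(\sum_k z_kc_k)N$ is nilpotent, so $\rho(P(z))=0<1$ for all $z$. Letting $c_1\to\infty$ produces elements of $\mathcal A$ of arbitrarily large norm. More generally, any family of simultaneously strictly upper triangular matrices works, since every $P(z)$ is then nilpotent.

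For $n=1$ this construction is unavailable. There $\mathcal A=\{\sum|a_k|<1\}$ is bounded, which explains the hypothesis $n\ge2$. The main obstacle is really only the openness step, namely justifying continuity of $\rho$ uniformly over the torus. I would handle it by the compactness and subsequence argument above, invoking continuity of eigenvalues as a standard fact.
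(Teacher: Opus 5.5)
Your proposal is correct and follows essentially the same route as the paper. Both reduce via Theorem~\ref{bigTH} to $\max_{z\in\mathbb T^m}\rho(P(z))<1$, get openness from continuity of $\rho$ and compactness of $\mathbb T^m$, star-shapedness from homogeneity of $\rho$, and unboundedness from a scaled nilpotent coefficient; the paper takes $A_1=tN$ with $N$ a full nilpotent Jordan block and $A_k=0$ for $k\ge2$, which is the same idea as your $c_kE_{12}$.
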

\begin{proof}
By Theorem~\ref{bigTH}, 
\[ \mathcal{A} := \{ (A_1,\ldots,A_m) : \rho(P(z)) < 1 \text{ for all } z \in \mathbb{T}^m \}.\]
Since the spectral radius is a continuous function of matrices and the torus $\mathbb{T}^m$ is compact, the function
\[ f(A_1,\ldots,A_m) = \max_{z \in \mathbb{T}^m} \rho(P(z)) \] 
is continuous, and hence $\mathcal{A} = f^{-1}((-\infty,1))$ is an open set.
	
Recall $\mathcal A$ is said to be star-shaped with respect to the origin if $(tA_1,\ldots,tA_m) \in \mathcal A$ for every $0 \leq t \leq 1$ whenever $(A_1,\ldots,A_m) \in  \mathcal A$. The homogeneity of the spectral radius implies that 
\[  \rho( z_1 (tA_1) + \cdots+ z_m (tA_m)) = t \rho(z_1 A_1  + \cdots + z_m A_m) <1 \]
for every $t \in [0,1]$ and every $(z_1,\ldots,z_m) \in \mathbb T^m$. Hence,  $(tA_1,\ldots,tA_m) \in \mathcal{A}$ whenever $(A_1,\ldots,A_m) \in \mathcal{A}$, i.e. $\mathcal{A}$ is star-shaped with center at the origin. 
	
Finally, if $n\geq 2$, let $N \in \mathcal{M}_n(\C)$ be a  single Jordan block with zeros along the diagonal. For any $t>0$, set $A_1=tN$ and $A_k=0$ for $k=2,\ldots,m$. It follows that 
\[  \rho(P(z)) = \rho( tN) =0 <1, \]
for every $z \in \mathbb{T}^m$, so $(A_1, \ldots,A_m) \in \mathcal{A}$.  But $\| A_1\|  = t\|N\| \rightarrow \infty \ (t\rightarrow\infty)$, so $\mathcal{A}$ is unbounded.
\end{proof}

\section{The critical spectral boundary}

The preceding results completely characterize the stability of $\mathcal S(A_1,\ldots,A_m)$ away from the critical spectral regime. Indeed, if
\[ \max_{z\in\mathbb T^m}\rho(P(z))<1, \]
then the system is asymptotically stable by Theorem~\ref{bigTH}, whereas if
\[ \rho(P(z))>1 \]
for some $z\in\mathbb T^m$, then the system is unstable by Corollary~\ref{unstable}. It therefore remains to examine the borderline case
\[ \max_{z\in\mathbb T^m}\rho(P(z))=1. \]
In this section we study this critical regime and show that the Jordan structure of the unimodular eigenvalues of the pencil $P(z)$ provides an important additional obstruction to Lyapunov stability.

\begin{theo}\label{defective-pencil}
Let $A_1,\ldots,A_m\in\mathcal M_n(\C)$, with $m\geq2$. Suppose that for some $z \in\mathbb T^m$, the pencil $P(z)$ has a nonsemisimple eigenvalue $\lambda$ with $|\lambda|=1$. Then $\mathcal S(A_1,\ldots,A_m)$ is unstable.
\end{theo}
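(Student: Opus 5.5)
By Remark~\ref{rotation-invariance}, we may replace $A_k$ by $\lambda^{-1}z_kA_k$. This does not change the stability type, since $|\lambda^{-1}z_k|=1$. We may therefore assume that $z=\mathbf 1$ and $\lambda=1$. In this normalized setting, $P(\mathbf 1)=A_1+\cdots+A_m=:A$ has the eigenvalue $1$ with a Jordan block of size at least $2$. Since $\lambda$ is nonsemisimple, there are vectors $v\neq0$ and $w$ with
\[ Av=v,\qquad Aw=w+v. \]
We argue by contradiction and suppose $\mathcal S(A_1,\ldots,A_m)$ is Lyapunov stable. As in the proof of Corollary~\ref{unstable}, this gives a constant $C$ with $\|x_\alpha\|\le C\|x\|_{\infty,\mathcal H}$ for all $\alpha\in\mathbb N^m$.

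The plan is to use the shell recursion of Proposition~\ref{ym} with constant boundary data $x_\beta=u$ for all $\beta\in\mathcal H$, where $u$ is to be chosen. Write $N_{p,k}:=\#B_{p,k}$; note that $N_{p,k}=\binom{p+m-2}{m-2}$ is independent of $k$, and write it as $N_p$. The forcing term \eqref{bp} then becomes
\[ b_p=mN_{p+1}u-(m-1)N_pAu. \]
This is a vector of polynomial size in $p$ whose leading behaviour is governed by $(m\,I-(m-1)A)u$. The shell has $\#S_p=O(p^{m-1})$ elements, so Lyapunov stability forces $\|y_p\|\le C'p^{m-1}$. The task is to choose $u$ so that the solution of $y_{p+1}=Ay_p+b_p$ grows like $p^{m}$, one power faster.

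The cleanest choice is $u=v$. Then $b_p=(mN_{p+1}-(m-1)N_p)v=c_pv$ with $c_p>0$ of exact order $p^{m-2}$, and $y_p=\gamma_pv$ where $\gamma_{p+1}=\gamma_p+c_p$. This only yields $\gamma_p\sim p^{m-1}$, which is not enough, so the data must excite the generalized eigenvector. I would therefore take $u=w$ instead. Then $Au=w+v$, and projecting onto the Jordan block shows that the $v$-component of $y_p$ receives, at each step, the $w$-component of $y_p$ (which is of order $p^{m-1}$) plus a forcing of order $p^{m-2}$. Summing, the $v$-component grows like $p^{m}$, contradicting the bound $O(p^{m-1})$.

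The main obstacle is bookkeeping with the Jordan coordinates. Choose a left functional $\ell$ with $\ell(v)=0$, $\ell(w)=1$, $\ell A=\ell$ (a left eigenvector for the eigenvalue $1$ within the block), and a functional $\ell'$ with $\ell'A=\ell'+\ell$. Apply $\ell$ and $\ell'$ to the shell recursion:
\[ \ell y_{p+1}=\ell y_p+\ell b_p,\qquad \ell' y_{p+1}=\ell' y_p+\ell y_p+\ell' b_p. \]
Here $\ell b_p=mN_{p+1}-(m-1)N_p\sim p^{m-2}/(m-2)!>0$, so $\ell y_p\gtrsim p^{m-1}$. Then $\ell'y_p\ge\sum_{q<p}\ell y_q-O(p^{m-1})\gtrsim p^{m}$, since the term $\ell' b_p$ is $O(p^{m-2})$. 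For $m=2$ the count $N_p=1$ still works, giving $\ell y_p\sim p$ and $\ell'y_p\sim p^2$.

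Finally, $|\ell'y_p|\le\|\ell'\|\,\#S_p\,\sup\|x_\alpha\|$, where the supremum includes the boundary values $\|w\|$, and this is $O(p^{m-1})$. That contradicts $\ell'y_p\gtrsim p^m$, so $\mathcal S(A_1,\ldots,A_m)$ is not Lyapunov stable, hence unstable. The delicate point is verifying positivity of $mN_{p+1}-(m-1)N_p$ and that the lower-order forcing cannot cancel the leading growth; both follow from the explicit binomial counts.
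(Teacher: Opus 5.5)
Your route is the paper's: normalize to $z=\mathbf 1$, $\lambda=1$, impose the constant boundary datum $w$, run the shell recursion of Proposition~\ref{ym}, and compare the $p^{m}$ growth of the $v$-coordinate of $y_p$ with the $O(p^{m-1})$ bound coming from $\#S_p$. However, the step ``choose $\ell$ with $\ell(v)=0$, $\ell(w)=1$, $\ell A=\ell$'' fails in general if $\ell$ is meant as a functional on all of $\mathbb C^n$. A functional with $\ell A=\ell$ must vanish on $\operatorname{range}(A-I)$. If every Jordan block of $A$ at the eigenvalue $1$ has size at least $3$, then $w\in\ker(A-I)^2\subseteq\operatorname{range}(A-I)$, so every left eigenvector kills $w$. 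For instance, if $A$ is the $3\times3$ Jordan block with eigenvalue $1$, $v=e_1$ and $w=e_2$, then the left eigenvectors are exactly the multiples of $e_3^{*}$. In that case the identities $\ell A y_p=\ell y_p$ and $\ell' A y_p=\ell' y_p+\ell y_p$, which drive your two scalar recursions, are not justified as written.

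The missing observation, which the paper states explicitly, is that the whole computation stays inside $\operatorname{span}\{v,w\}$. Indeed, $y_0=(|B_0|+1)w+v$, each $b_p=mN_{p+1}w-(m-1)N_p(w+v)$ lies in this span, and $A$ leaves the span invariant. Hence $y_p\in\operatorname{span}\{v,w\}$ for every $p$. On this subspace, $\ell A=\ell$ and $\ell'A=\ell'+\ell$ hold for any functionals with $\ell(v)=0$, $\ell(w)=1$ and $\ell'(v)=1$. If you also take $\ell'(w)=0$, these are exactly the paper's coordinates $a_p$ and $c_p$ in $y_p=a_pw+c_pv$, and your estimates then go through verbatim. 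There is also a minor counting slip: $\#B_{p,k}=\binom{p+m-1}{m-2}$, not $\binom{p+m-2}{m-2}$. This is harmless, since you only use $N_{p+1}\ge N_p$ and $N_p\asymp p^{m-2}$.
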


\begin{proof}
If $\rho(P(z))>1$, the conclusion follows immediately from Corollary~\ref{unstable}. We may therefore assume that
\[ \rho(P(z))=1. \]
	
By Remark~\ref{rotation-invariance}, unimodular rescaling of the coefficient matrices does not change the stability type of the system. Replacing $A_k$ by
\[ B_k:=\lambda^{-1}z_kA_k, \qquad k=1,\ldots,m, \]
we may therefore assume that
\[ S:=B_1+\cdots+B_m \]
has the nonsemisimple eigenvalue $1$. Thus there exist linearly independent vectors $v,w\in\C^n$ such that
\begin{equation}\label{Jordan-chain-S}
	Sv=v, \qquad Sw=w+v.
\end{equation}
	
Suppose, toward a contradiction, that $\mathcal S(B_1,\ldots,B_m)$ is Lyapunov stable. Since the system is linear, every bounded boundary datum then gives rise to a bounded solution. Indeed, this follows by scaling the boundary data in the definition of Lyapunov stability. Prescribe the constant boundary data
\[ x_\beta=w, \qquad \beta\in\mathcal H. \]
Let
\[ y_p=\sum_{\alpha\in S_p}x_\alpha \]
be the shell sums introduced in Proposition~\ref{ym}. By the assumed Lyapunov stability, there exists $C>0$ such that
\[ \|x_\alpha\|\leq C \qquad\text{for all }\alpha\in\mathbb N^m. \]
Enlarging $C$, if necessary so that $C \geq \|w\|$, we obtain
\[ \| x_\alpha\| \leq C \qquad (\alpha \in S_p, p \geq 0).\]
Hence
\begin{equation}\label{shell-upper}
	\|y_p\| \leq C|S_p| =O(p^{m-1}),
\end{equation}
since
\[ |I_p| = \binom{p+m-1}{m-1} \]
and
\[ |B_p| = m\binom{p+m-1}{m-2}. \]
	
We now use the recurrence of Proposition~\ref{ym}. Set
\[ h_p:=|B_{p,k}| = \binom{p+m-1}{m-2},\]
which is independent of $k$. Since the boundary data are equal to $w$, the forcing term in the shell recursion is
\[ \begin{aligned}
         b_p &= \sum_{k=1}^m \left[ \sum_{\alpha\in B_{p+1,k}}w - \sum_{j \neq k} \sum_{\alpha \in B_{p,j}}  B_kw \right] \\
		&= mh_{p+1}w-(m-1)h_pSw.
\end{aligned} \]
Using \eqref{Jordan-chain-S}, this becomes
\begin{equation}\label{bp-jordan}
	b_p = d_pw-(m-1)h_pv, \qquad d_p:=mh_{p+1}-(m-1)h_p.
\end{equation}
Notice that
\begin{equation} \label{hp-jordan} 
	h_p\sim \frac{p^{m-2}}{(m-2)!}, \qquad d_p\sim \frac{p^{m-2}}{(m-2)!}.
\end{equation}
	
The initial shell sum $y_0$ belongs to $\operatorname{span}\{v,w\}$. Indeed, the boundary values in $S_0$ are multiples of $w$, while the unique interior value is
\[ x_{\mathbf 1}=Sw=w+v. \]
Since both $S$ and the forcing terms $b_p$ leave $\operatorname{span}\{v,w\}$ invariant, we may write
\[ y_p=a_pw+c_pv. \]
The shell recursion
\[ y_{p+1}=Sy_p+b_p\]
and \eqref{Jordan-chain-S}--\eqref{bp-jordan} give
\begin{align}
	a_{p+1} &=a_p+d_p,     \label{ap-rec}\\
	c_{p+1} &=c_p+a_p-(m-1)h_p.    \label{cp-rec}
\end{align}
	
From \eqref{ap-rec} and the asymptotic behavior of $d_p$,
\[ a_p = a_0+\sum_{r=0}^{p-1}d_r \sim \frac{p^{m-1}}{(m-1)!}. \]
On the other hand,
\[ h_p=O(p^{m-2}), \]
so \eqref{cp-rec} yields
\[ c_{p+1}-c_p = a_p-(m-1)h_p \sim \frac{p^{m-1}}{(m-1)!}. \]
Summing once more,
\begin{equation}\label{cp-growth}
	c_p \sim \frac{p^m}{m!}.
\end{equation}
	
Since $v$ and $w$ are linearly independent, there exists a linear functional $\varphi\in(\C^n)^*$ such that
\[ \varphi(v)=1, \qquad \varphi(w)=0. \]
Consequently,
\[ |\varphi(y_p)|=|c_p|, \]
and therefore, by \eqref{cp-growth},
\[ \|y_p\| \geq \frac{|c_p|}{\|\varphi\|} \geq C_1p^m\]
for all sufficiently large $p$ and some $C_1>0$. This contradicts the estimate \eqref{shell-upper}, which gives
\[ \|y_p\|=O(p^{m-1}). \]
	
Hence $\mathcal S(B_1,\ldots,B_m)$ cannot be Lyapunov stable. By Remark~\ref{rotation-invariance}, the original system $\mathcal S(A_1,\ldots,A_m)$ is also unstable.
\end{proof}

Combining Theorem~\ref{defective-pencil} with the instability criterion of Corollary~\ref{unstable}, we obtain a convenient necessary spectral condition for Lyapunov stability. It summarizes the two obstructions that can occur on the torus: spectral radius larger than one, and defective unimodular eigenvalues in the critical case.

\begin{cor}\label{necessary-lyap}
If $\mathcal S(A_1,\ldots,A_m)$ is Lyapunov stable, then, for every $z \in \mathbb T^m$,
\[ \rho(P(z))\leq 1. \]
Moreover, every eigenvalue
\[ \lambda\in \sigma(P(z))\cap\mathbb T \]
is semisimple.
\end{cor}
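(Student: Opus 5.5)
This corollary follows from the two instability results already proved, argued by contraposition. Assume $\mathcal S(A_1,\ldots,A_m)$ is Lyapunov stable and fix an arbitrary $z\in\mathbb T^m$.

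For the first assertion, suppose instead that $\rho(P(z))>1$. Corollary~\ref{unstable} then says the system is unstable, which means it is not Lyapunov stable by Definition~\ref{def:stability}(e). This contradicts our assumption, so $\rho(P(z))\leq 1$.

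For the second assertion, take any $\lambda\in\sigma(P(z))\cap\mathbb T$ and suppose $\lambda$ is not semisimple. Theorem~\ref{defective-pencil} applies directly to this $z$ and this $\lambda$. Its hypotheses are $m\geq 2$, which holds throughout the paper, and $|\lambda|=1$ with $\lambda$ nonsemisimple. The theorem then yields instability, again contradicting Lyapunov stability. Hence every unimodular eigenvalue of $P(z)$ is semisimple.

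Since $z\in\mathbb T^m$ was arbitrary, both conclusions hold on the whole torus. There is no real obstacle here. The only point to confirm is that "unstable" is defined as the negation of Lyapunov stability, so each contradiction is exact.
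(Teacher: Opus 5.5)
Your proof is correct and follows the paper's argument. The bound $\rho(P(z))\leq 1$ follows from Corollary~\ref{unstable} by contraposition, and semisimplicity of every unimodular eigenvalue of $P(z)$ follows from Theorem~\ref{defective-pencil} by contraposition, since ``unstable'' is defined as the negation of Lyapunov stability.
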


\begin{proof}
If
\[ \rho(P(z))>1 \]
for some $z\in\mathbb T^m$, then
$\mathcal S(A_1,\ldots,A_m)$ is unstable by
Corollary~\ref{unstable}. Hence Lyapunov stability implies
\[ \rho(P(z))\leq1 \qquad \text{for every }z\in\mathbb T^m. \]
	
If, for some $z\in\mathbb T^m$, the matrix $z_1A_1+\cdots+z_mA_m$ had a nonsemisimple eigenvalue $\lambda\in\mathbb T$, then Theorem~\ref{defective-pencil} would imply that the system $\mathcal S(A_1,\ldots,A_m)$ is unstable. Therefore every unimodular eigenvalue of every toral pencil must be semisimple.
\end{proof}

\begin{ex}\label{ex1}
The critical spectral condition does not by itself imply instability. Indeed, let
\[ A_1:=I_n, \qquad A_2=\cdots=A_m:=0. \]
Then
\[ P(z)=z_1I_n,\]
and hence	
\[ \rho(P(z))=|z_1|=1 \qquad \text{for every }z\in\mathbb T^m.\]
Thus the system lies on the critical spectral boundary.
	
The recurrence reduces to
\[ x_\alpha=x_{\alpha-e_1}, \qquad \alpha\in\mathbb N^m. \]
Therefore every interior value is obtained by propagating a boundary value in the first coordinate direction, and
\[ \sup_{\alpha\in\mathbb N^m}\|x_\alpha\| \leq \|x\|_{\infty, \mathcal H}. \]
Hence the system is Lyapunov stable.
	
On the other hand, the system is not attractive. For example, prescribe the constant boundary data
\[ x_\beta=v, \qquad \beta\in\mathcal H, \]
for some nonzero $v\in\mathbb C^n$. Then
\[ x_\alpha=v \qquad \text{for every }\alpha\in\mathbb N^m, \]
so $x_\alpha$ does not converge to zero in the sense of Pringsheim. Since $v$ may be chosen with arbitrarily small nonzero norm, the system is not attractive. Consequently,  $\mathcal S(A_1,\ldots,A_m)$	is neutrally stable.
\end{ex}

\begin{ex}
The converse of Corollary~\ref{necessary-lyap} does not hold, even for commuting coefficient matrices. Indeed, consider the two-dimensional system $\mathcal S(A_1,A_2)$ with
\[ A_1=\frac12 I+N, \qquad A_2=\frac12 I-N, \qquad N=
	\begin{bmatrix}
		0&1\\
		0&0
	\end{bmatrix}. \]
The matrices $A_1$ and $A_2$ commute, and for $(z_1,z_2)\in\mathbb T^2$ the associated pencil is
\[ P(z_1,z_2) = \frac{z_1+z_2}{2}\,I+(z_1-z_2)N. \]
Since $N$ is nilpotent, the only eigenvalue of $P(z_1,z_2)$ is
\[ \lambda(z_1,z_2)=\frac{z_1+z_2}{2}, \]
with algebraic multiplicity two. Consequently,
\[ \rho(P(z_1,z_2)) = \left|\frac{z_1+z_2}{2}\right| \leq1 \qquad \text{for every }(z_1,z_2)\in\mathbb T^2. \]
Moreover, equality can occur only when $z_1=z_2$. Indeed, $|z_1+z_2|=2$ for $z_1,z_2\in\mathbb T$ if and only if $z_1=z_2$. In that case,
\[ P(z_1,z_1)=z_1I, \]
so its unimodular eigenvalue is semisimple. Thus this example satisfies both necessary conditions in Corollary~\ref{necessary-lyap}: every toral pencil has spectral radius at most one, and every unimodular eigenvalue of every toral pencil is semisimple.
	
Nevertheless, as shown in our earlier two-dimensional study \cite{BoMc26}, the system $\mathcal S(A_1,A_2)$ is unstable. Hence the pointwise spectral conditions
\[ \rho(P(z)) \leq 1 \quad \text{and} \quad \text{every } \lambda \in  \sigma(P(z))\cap\mathbb T \text{ is semisimple} \]
for every $z\in\mathbb T^m$ are necessary, but not sufficient, for Lyapunov stability. In particular, the critical stability problem depends on more than the Jordan structure of each individual toral pencil.
\end{ex}

The next example illustrates the defective critical situation described in Theorem~\ref{defective-pencil}. In fact, the toral spectral radius is identically equal to one, yet the system is unstable because a critical pencil has a nonsemisimple unimodular eigenvalue.

\begin{ex}\label{Ex 2}
Let $m\geq2$ and $n=m$. Define
\[ A_1:=I_m, \qquad A_k:=J,\quad k=2,\ldots,m, \]
where $J\in\mathcal M_m(\mathbb C)$ is the nilpotent Jordan block with ones on the superdiagonal. Then, for every $z=(z_1,\ldots,z_m)\in\mathbb T^m$,
\[ P(z) = z_1I_m+\left(\sum_{k=2}^m z_k\right)J. \]
Since $J$ is strictly upper triangular, $P(z)$ is upper triangular with all diagonal entries equal to $z_1$. Hence
\[ \rho(P(z))=|z_1|=1 \qquad \text{for every }z\in\mathbb T^m. \]
Thus the system lies entirely on the critical spectral boundary. In particular, at $z=(1,\ldots,1)$,
\[ P(1,\ldots,1)=I_m+(m-1)J. \]
The eigenvalue $1$ is nonsemisimple, since $J\neq0$. Therefore, by Theorem~\ref{defective-pencil},
\[ \mathcal S(A_1,\ldots,A_m) \]
is unstable.
\end{ex}

Theorem \ref{defective-pencil} shows that semisimplicity of the unimodular spectrum of the critical pencils is necessary for Lyapunov stability. In general this condition need not be sufficient. For simultaneously diagonalizable matrices, however, the joint spectral structure allows a complete description of the critical regime, which is the subject of the next section.

\section{The simultaneously diagonalizable  case}

We next consider the case in which the matrices $A_1,\ldots,A_m$ are simultaneously diagonalizable. Thus there exists an invertible matrix $X$ such that
\begin{equation}\label{diag}
	A_k=X\Lambda_kX^{-1}, \qquad k=1,\ldots,m,
\end{equation}
where
\[ \Lambda_k = \operatorname{diag}(a_{k,1},\ldots,a_{k,n}). \]
Here $a_{k,j}$ denotes the eigenvalue of $A_k$ corresponding to the $j$-th common eigenvector of $A_1,\ldots,A_m$. Consequently,
\[ P(z) =X\left(\sum_{k=1}^m z_k\Lambda_k\right)X^{-1}, \]
and therefore
\[ \rho(P(z)) = \max_{1\leq j\leq n} \left| \sum_{k=1}^m z_k a_{k,j} \right|. \]
Taking the maximum over $z\in\mathbb T^m$ gives
\[ \max_{z\in\mathbb T^m}\rho(P(z)) = \max_{1\leq j\leq n} \sum_{k=1}^m |a_{k,j}|. \]
Thus, in the simultaneously diagonalizable case, the critical spectral condition reduces to a collection of scalar conditions along the common eigendirections.

In Theorem~\ref{simultaneously-diagonalizable}, we obtain a complete characterization of the stability of $\mathcal S(A_1,\ldots,A_m)$ under these assumptions. We begin by analyzing the scalar case, in which the matrices $A_1,\ldots,A_m$ are replaced by complex numbers $a_1,\ldots,a_m$. Scalar Fornasini--Marchesini models and their asymptotic stability have also been studied in \cite{Ba19}. We record below the scalar classification needed for the bounded-boundary/Pringsheim framework considered here.

\begin{lem}  \label{scalar}
The system $\mathcal{S}(a_1, \ldots, a_m)$ is:
	\begin{itemize}
		\item[(a)] asymptotically stable if $|a_1| + \cdots + |a_m| <1$.
		\item[(b)] neutrally stable if $|a_1| + \cdots + |a_m| = 1$.
		\item[(c)] unstable if $|a_1| + \cdots + |a_m| >1.$
	\end{itemize}	
\end{lem}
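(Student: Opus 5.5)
The plan is to read all three parts off results already established, since in the scalar case $n=1$ the pencil is $P(z)=\sum_k z_ka_k$ and
\[ \max_{z\in\mathbb T^m}\rho(P(z))=\max_{z\in\mathbb T^m}\Bigl|\sum_{k=1}^m z_ka_k\Bigr| = |a_1|+\cdots+|a_m|, \]
the maximum being attained by choosing $z_k=\overline{a_k}/|a_k|$ whenever $a_k\neq0$ (and $z_k=1$ otherwise). The upper bound is just the triangle inequality.

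For (a), if $\sum_k|a_k|<1$, then $\rho(P(z))<1$ on $\mathbb T^m$, so Theorem~\ref{bigTH} gives asymptotic stability. Equivalently, one may invoke the first corollary following Theorem~\ref{bigTH}, taking the absolute value as the induced norm on $\mathbb C$. For (c), if $\sum_k|a_k|>1$, the choice of $z$ above gives $\rho(P(z))>1$, and Corollary~\ref{unstable} yields instability.

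For (b), suppose $\sum_k|a_k|=1$. We must show Lyapunov stability and the failure of attractivity.

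\textbf{Lyapunov stability.} This follows from the second half of the norm corollary: $\sum_k\|a_k\|\le 1$ gives $\sup_\alpha|x_\alpha|\le\|x\|_{\infty,\mathcal H}$ via the strong induction on $|\alpha|$ carried out there.

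\textbf{Non-attractivity.} By Remark~\ref{rotation-invariance}, replacing $a_k$ by $z_ka_k$ with the $z$ chosen above, we may assume every $a_k\geq0$ and $\sum_k a_k=1$. Then the constant array $x_\alpha\equiv c$ on $\mathbb N^m\cup\mathcal H$ solves the recurrence, because $\sum_k a_kc=c$. Its boundary data satisfy $\|x\|_{\infty,\mathcal H}=|c|$, which can be made arbitrarily small, yet $x_\alpha\not\to0$ in the Pringsheim sense. This is exactly the argument used in the proof of $(a)\Rightarrow(b)$ in Theorem~\ref{bigTH}, where the eigenvalue is $\lambda=1$ and the eigenvector is $v=c$. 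Hence the system is neutrally stable.

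I expect no real obstacle here. The only point needing care is that the rotation in Remark~\ref{rotation-invariance} preserves both the Lyapunov bound and the (non)attractivity, which the remark states explicitly. The maximization over $\mathbb T^m$ is elementary.
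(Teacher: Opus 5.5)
Your proposal is correct and follows the paper's proof essentially step for step. It uses the same torus maximization via $z_ka_k=|a_k|$, Theorem~\ref{bigTH} for (a) and Corollary~\ref{unstable} for (c); for (b) it uses the strong induction on $|\alpha|$ (which you cite from the norm corollary, while the paper redoes it inline) together with rotation to $\mathcal S(|a_1|,\ldots,|a_m|)$ and a small constant solution to rule out attractivity.
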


\begin{proof}
For each $k$ with $a_k\neq0$, choose $z_k\in\mathbb T$ such that $z_ka_k=|a_k|$, and choose $z_k\in\mathbb T$ arbitrarily if $a_k=0$. Then
\[ z_1a_1+\cdots+z_ma_m = |a_1|+\cdots+|a_m|. \]
Together with the triangle inequality, this gives
\begin{equation}\label{max torus}
	\max_{z\in\mathbb T^m} \left| \sum_{k=1}^m z_ka_k \right| = \sum_{k=1}^m |a_k|.
\end{equation}
	
\noindent \textit{(a)} If
\[ \sum_{k=1}^m|a_k|<1, \]
then \eqref{max torus} implies
\[ \left|\sum_{k=1}^m z_ka_k\right|<1 \qquad \text{for every }z\in\mathbb T^m. \]
Therefore, by Theorem~\ref{bigTH}, $\mathcal S(a_1,\ldots,a_m)$ is asymptotically stable.
	
\noindent \textit{(b)} Assume that
\[ \sum_{k=1}^m|a_k|=1. \]
Let
\[ M:=\|x\|_{\infty,\mathcal H}. \]
We claim that
\[ |x_\alpha|\leq M \qquad \text{for every }\alpha\in\mathbb N^m. \]
Indeed, this follows by strong induction on $|\alpha|$. For each $k$, the predecessor $\alpha-e_k$ either belongs to $\mathcal H$ or is an interior multi-index of smaller order, and hence
\[ |x_{\alpha-e_k}|\leq M. \]
Consequently,
\[ |x_\alpha| \leq \sum_{k=1}^m|a_k|\,|x_{\alpha-e_k}| \leq \left(\sum_{k=1}^m|a_k|\right)M = M. \]
Thus the system is Lyapunov stable.
	
It remains to show that it is not attractive. Choose $z=(z_1,\ldots,z_m)\in\mathbb T^m$ as above, so that $z_ka_k=|a_k|$. By the unimodular invariance established in
Remark~\ref{rotation-invariance}, $\mathcal S(a_1,\ldots,a_m)$ has the same stability type as	
\[ \mathcal S(|a_1|,\ldots,|a_m|). \]
For the latter system, the constant boundary data
\[ x_\beta=1, \qquad \beta\in\mathcal H, \]
give the constant solution
\[ x_\alpha=1, \qquad \alpha\in\mathbb N^m,\]
because $\sum_{k=1}^m|a_k|=1$. By linearity, any nonzero scalar multiple of this constant solution is again a constant solution. Hence arbitrarily small bounded boundary data can produce solutions that do not converge to zero, and the system is not attractive. Therefore $\mathcal S(a_1,\ldots,a_m)$ is neutrally stable.
	
\noindent \textit{(c)} If
\[ \sum_{k=1}^m|a_k|>1, \]
then \eqref{max torus} yields a point $z\in\mathbb T^m$ such that
\[ \left|\sum_{k=1}^m z_ka_k\right|>1.\]
Corollary~\ref{unstable} therefore implies that $\mathcal S(a_1,\ldots,a_m)$ is unstable.
\end{proof}

\medskip

The simultaneous diagonalization in \eqref{diag} reduces the multidimensional system to a collection of independent scalar systems, one along each common eigendirection. Applying Lemma~\ref{scalar} to these scalar components yields the following complete classification.

\begin{theo} \label{simultaneously-diagonalizable}
Let $A_1,\ldots,A_m \in \mathcal{M}_n(\C)$ be simultaneously diagonalizable as in \eqref{diag}. The system $\mathcal{S}(A_1,\ldots,A_m)$ is:
\begin{itemize}
	\item[(a)] asymptotically stable if $\sum_{k=1}^m |a_{k,j}|<1$ for every $j=1,\ldots,n$. 
	\item[(b)] neutrally stable if $\sum_{k=1}^m |a_{k,j}| \leq 1$ for all $j$ and the equality holds for at least one  $j \in \{\ 1,\ldots,n\}$.
	\item[(c)] unstable if $\sum_{k=1}^m |a_{k,j}|>1$ for some $j \in \{ 1,\ldots,n\}$.
\end{itemize}
\end{theo}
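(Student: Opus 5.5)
The plan is to decouple the system along the common eigendirections and then apply Lemma~\ref{scalar} to each scalar component. Given a solution $(x_\alpha)$ of $\mathcal S(A_1,\ldots,A_m)$ with bounded boundary data, set $u_\alpha:=X^{-1}x_\alpha$ for all $\alpha\in\mathbb N^m\cup\mathcal H$. Multiplying the recurrence \eqref{rec rel} on the left by $X^{-1}$ and using \eqref{diag} gives
\[ u_\alpha=\sum_{k=1}^m\Lambda_k u_{\alpha-e_k}, \]
so each coordinate $u^{(j)}_\alpha$ solves the scalar system $\mathcal S(a_{1,j},\ldots,a_{m,j})$ with boundary data $(u^{(j)}_\beta)_{\beta\in\mathcal H}$. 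The coordinates evolve independently. Since $\|u\|_\infty\le\|X^{-1}\|\,\|x\|$ and $\|x\|\le\|X\|\,\|u\|_\infty$, smallness, boundedness and Pringsheim decay transfer in both directions between $(x_\alpha)$ and the family of components $(u^{(j)}_\alpha)_{j}$, up to the constants $\|X\|$ and $\|X^{-1}\|$.

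With this in hand, the three cases go as follows.

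\emph{(a)} The cleanest route is to note that $\max_{z\in\mathbb T^m}\rho(P(z))=\max_j\sum_k|a_{k,j}|<1$, which was computed just before Lemma~\ref{scalar}, and then invoke Theorem~\ref{bigTH} directly.

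\emph{(c)} The same formula yields a point $z\in\mathbb T^m$ with $\rho(P(z))>1$, so Corollary~\ref{unstable} applies.

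\emph{(b)} Two things are needed.
\begin{itemize}
\item \emph{Lyapunov stability.} For each $j$ we have $\sum_k|a_{k,j}|\le1$, so the strong-induction argument in the proof of Lemma~\ref{scalar}(b) gives $|u^{(j)}_\alpha|\le\sup_{\beta\in\mathcal H}|u^{(j)}_\beta|$. This argument only uses the inequality $\le 1$, so it also covers the components with strict inequality. Hence $\sup_\alpha\|x_\alpha\|\le\|X\|\,\|X^{-1}\|\,\|x\|_{\infty,\mathcal H}$, and taking $\delta=\varepsilon/(\|X\|\,\|X^{-1}\|)$ gives Lyapunov stability.
\item \emph{Non-attractivity.} Pick an index $j_0$ for which equality holds. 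Lemma~\ref{scalar}(b) shows, via unimodular rotation to $|a_{k,j_0}|$, that there is boundary data of the form $x_\beta=\eta\,\overline{z}^{\,\beta}\,Xe_{j_0}$, with $|z^\beta|=1$ and $\eta$ an arbitrarily small nonzero scalar. This data produces the solution $x_\alpha=\eta\,\overline{z}^{\,\alpha}Xe_{j_0}$, whose norm is a nonzero constant. So arbitrarily small bounded boundary data give solutions that do not tend to zero in the Pringsheim sense.
\end{itemize}

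The main obstacle is routine rather than conceptual. One has to check that the rotation from Remark~\ref{rotation-invariance} can be applied to a single eigendirection, which holds because we work directly with the $j_0$-th scalar equation. One also has to track the norm-equivalence constants from $X$ so that the $\varepsilon$–$\delta$ statements transfer cleanly.
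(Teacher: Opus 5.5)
Your proposal is correct and takes essentially the paper's route: conjugate by $X^{-1}$ to decouple the system into the scalar systems $\mathcal S(a_{1,j},\ldots,a_{m,j})$, then use the arguments of Lemma~\ref{scalar}. The differences are cosmetic. For (a) and (c) you apply Theorem~\ref{bigTH} and Corollary~\ref{unstable} directly to the matrix pencil, using $\max_{z\in\mathbb T^m}\rho(P(z))=\max_j\sum_k|a_{k,j}|$. For (b) you make explicit the bound $\sup_\alpha\|x_\alpha\|\le\|X\|\,\|X^{-1}\|\,\|x\|_{\infty,\mathcal H}$ and the non-decaying solution $\eta\,\overline{z}^{\,\alpha}Xe_{j_0}$, both of which the paper leaves implicit in its sentence ``the stability of the original system is determined by the stability of these $n$ scalar systems.''
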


\begin{proof}
Set
\[ x_\alpha':=X^{-1}x_\alpha. \]
Since $X$ is invertible, the transformation $x_\alpha\mapsto x_\alpha'$ preserves boundedness and Pringsheim convergence. Hence
\[ \mathcal S(A_1,\ldots,A_m) \]
has the same stability type as the transformed system.
	
Using $A_k=X\Lambda_kX^{-1}$, the recurrence \eqref{rec rel} becomes
\begin{equation}\label{rec rel'}
	x_\alpha' = \sum_{k=1}^m \Lambda_k x_{\alpha-e_k}'.
\end{equation}
Because each $\Lambda_k$ is diagonal, the system decouples coordinatewise. More precisely, for each $j=1,\ldots,n$, the $j$-th component $(x_\alpha')^{(j)}$ satisfies
\[ (x_\alpha')^{(j)} = \sum_{k=1}^m a_{k,j}(x_{\alpha-e_k}')^{(j)}, \]
that is, the scalar system
\[ \mathcal S(a_{1,j},\ldots,a_{m,j}). \]
Therefore the stability of the original system is determined by the stability of these $n$ scalar systems, and assertions (a)--(c) follow directly from Lemma~\ref{scalar}.
\end{proof}

\medskip

As a consequence, we obtain the following result, which gives a simple sufficient condition for the asymptotic and Lyapunov stability of $\mathcal{S}(A_1,\ldots,A_m)$ in terms of the sum of the spectral radii of the matrices $A_1,\ldots,A_m$.

\begin{cor}\label{sp radius comm}
Let $A_1,\ldots,A_m\in\mathcal M_n(\C)$ be simultaneously diagonalizable. Then:
\begin{enumerate}
	\item[(a)] If $\rho(A_1)+\cdots+\rho(A_m)<1$,  then $\mathcal S(A_1,\ldots,A_m)$ is asymptotically stable.
	\item[(b)] If $\rho(A_1)+\cdots+\rho(A_m)\leq1$,  then $\mathcal S(A_1,\ldots,A_m)$ is Lyapunov stable.
\end{enumerate}
\end{cor}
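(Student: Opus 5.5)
The plan is to reduce everything to Theorem~\ref{simultaneously-diagonalizable}. With the notation of \eqref{diag}, each $a_{k,j}$ is an eigenvalue of $A_k$, since $A_k$ is similar to $\Lambda_k$ and the diagonal entries of $\Lambda_k$ are its eigenvalues. Hence
\[ |a_{k,j}|\leq \rho(A_k) \qquad \text{for every } k=1,\ldots,m \text{ and } j=1,\ldots,n. \]
Summing over $k$ for a fixed common eigendirection $j$ gives
\[ \sum_{k=1}^m |a_{k,j}| \leq \sum_{k=1}^m \rho(A_k). \]
This bound holds for every $j$, and it is the only estimate needed.

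For part (a), assume $\sum_k\rho(A_k)<1$. The bound above then gives $\sum_k |a_{k,j}|<1$ for every $j$, and Theorem~\ref{simultaneously-diagonalizable}(a) yields asymptotic stability.

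For part (b), assume $\sum_k\rho(A_k)\leq 1$, so that $\sum_k |a_{k,j}|\leq 1$ for all $j$. There are two cases. If the inequality is strict for every $j$, Theorem~\ref{simultaneously-diagonalizable}(a) gives asymptotic stability, which includes Lyapunov stability. Otherwise equality holds for at least one $j$, and Theorem~\ref{simultaneously-diagonalizable}(b) gives neutral stability, which is Lyapunov stability by Definition~\ref{def:stability}.

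There is no substantial obstacle. The only point requiring care is that $\rho(A_k)$ is the maximum of $|a_{k,j}|$ over $j$, with the maximizing index depending on $k$. So $\max_j\sum_k|a_{k,j}|$ may be strictly smaller than $\sum_k\rho(A_k)$, and only the one-directional inequality is used. Simultaneous diagonalizability is essential: it is what lets the eigenvalues of the different $A_k$ be paired along common eigendirections, which is exactly where the nilpotent counterexample announced in the introduction breaks down.
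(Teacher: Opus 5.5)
Your proposal is correct and follows essentially the same route as the paper: both use $|a_{k,j}|\leq\rho(A_k)$ to get $\sum_k|a_{k,j}|\leq\sum_k\rho(A_k)$ for every common eigendirection $j$, and then apply Theorem~\ref{simultaneously-diagonalizable}. Your explicit two-case split in part (b), all inequalities strict versus equality for some $j$, is just a slightly more careful phrasing of the paper's remark that each scalar component is either asymptotically or neutrally stable, hence Lyapunov stable.
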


\begin{proof}
Since the matrices are simultaneously diagonalizable, there exists an invertible matrix $X$ such that
\[ A_k = X\Lambda_kX^{-1}, \qquad \Lambda_k = \operatorname{diag}(a_{k,1},\ldots,a_{k,n}), \]
for $k=1,\ldots,m$. For every $j=1,\ldots,n$ and $k=1,\ldots,m$, we have
\[ |a_{k,j}|\leq\rho(A_k).\]
Hence, for every common eigendirection $j$,
\begin{equation}\label{rho-sum-bound}
	\sum_{k=1}^m |a_{k,j}| \leq \sum_{k=1}^m\rho(A_k).
\end{equation}
	
If
\[ \sum_{k=1}^m\rho(A_k)<1, \]
then \eqref{rho-sum-bound} gives
\[ \sum_{k=1}^m|a_{k,j}|<1 \qquad \text{for every }j=1,\ldots,n.\]
Therefore, by Theorem~\ref{simultaneously-diagonalizable}(a), $\mathcal S(A_1,\ldots,A_m)$ is asymptotically stable. Similarly, if
\[ \sum_{k=1}^m\rho(A_k)\leq1, \]
then
\[ \sum_{k=1}^m|a_{k,j}|\leq1 \qquad \text{for every }j=1,\ldots,n. \]
By Theorem~\ref{simultaneously-diagonalizable}, each scalar system associated with a common eigendirection is either asymptotically stable or neutrally stable. In either case it is Lyapunov stable. Consequently, $\mathcal S(A_1,\ldots,A_m)$ is Lyapunov stable.
\end{proof}

It is natural to ask whether the sufficient condition in Corollary~\ref{sp radius comm} remains valid without the assumption of simultaneous diagonalizability. More precisely, in the critical regime
\[ \max_{z\in\mathbb T^m}\rho(P(z))=1, \]
does the condition
\[ \sum_{k=1}^m \rho(A_k)<1 \]
guarantee Lyapunov stability? The following example shows that the answer is negative.

\begin{ex} \label{ex 2}
For $m=2$, let
\[ J=
	\begin{bmatrix}
		1&1\\
		0&1
	\end{bmatrix},
	\qquad
	A_1=
	\begin{bmatrix}
		0&I_2\\
		0&0
	\end{bmatrix},
	\qquad
	A_2=
	\begin{bmatrix}
		0&0\\
		J&0
	\end{bmatrix}.	\]
Since $A_1^2=A_2^2=0$, both matrices are nilpotent and therefore
\[ \rho(A_1)+\rho(A_2)=0<1. \]
On the other hand,
\[ P(z_1,z_2) = z_1A_1+z_2A_2 =
	\begin{bmatrix}
		0&z_1I_2\\
		z_2J&0
	\end{bmatrix},\]
and hence
\[ P(z_1,z_2)^2 = z_1z_2
	\begin{bmatrix}
		J&0\\
		0&J
	\end{bmatrix}.\]
Since $\rho(J)=1$ and 
\[ \rho(P(z))^2 = \rho(P(z)^2) = |z_1z_2| \rho(J),\]
it follows that
\[ \rho(P(z_1,z_2))=1 \qquad \text{for every }(z_1,z_2)\in\mathbb T^2. \]
Thus the system lies in the critical regime. At $z_1=z_2=1$,
\[ P(1,1)=
	\begin{bmatrix}
		0&I_2\\
		J&0
	\end{bmatrix}.\]
Its characteristic polynomial is
\[ \det(\lambda^2 I_2-J) = (\lambda^2-1)^2, \]
so $1$ has algebraic multiplicity $2$. Moreover,
\[ P(1,1)
	\begin{bmatrix}
		x\\y
	\end{bmatrix}
	=
	\begin{bmatrix}
		x\\y
	\end{bmatrix}\]
implies $y=x$ and $Jx=x$. Since
\[ \dim\ker(J-I_2)=1, \]
the eigenvalue $1$ is defective. Therefore, by Theorem~\ref{defective-pencil}, the system $\mathcal S(A_1,A_2)$ is unstable.
\end{ex}

\section{Conclusion}

In this paper we have studied the stability of the multidimensional discrete system
\[ x_\alpha=\sum_{k=1}^m A_kx_{\alpha-e_k}, \qquad \alpha\in\mathbb N^m, \]
with uniformly bounded boundary data prescribed on the codimension-one coordinate faces and attractivity understood in the sense of Pringsheim convergence.

Our first contribution was to develop explicit tools for analyzing the multi-index recurrence. We introduced the word sums $W_\beta$, which encode all ordered products of the coefficient matrices with prescribed multiplicities, and used them to obtain a representation formula for the solution in terms of the boundary data. We also introduced a shell decomposition of the lattice and showed that the corresponding shell sums satisfy a one-dimensional affine recursion driven by $A_1+\cdots+A_m$.

The main result of the paper is a generalized Fornasini--Marchesini theorem for this $m$-dimensional setting. Writing
\[ P(z)=\sum_{k=1}^m z_kA_k, \]
we proved that asymptotic stability is equivalent to
\[ \rho(P(z))<1 \qquad \text{for every }z\in\mathbb T^m, \]
and also equivalent to the condition
\[ \det(I-P(z))\neq0 \qquad \text{for every }z\in\overline{\mathbb D}^{\,m}. \]
Thus the asymptotically stable region is completely characterized by the multivariate pencil $P(z)$.

We also investigated the critical spectral boundary
\[ \max_{z\in\mathbb T^m}\rho(P(z))=1. \]
In this regime, the spectral radius alone is no longer sufficient to determine Lyapunov stability. We showed that if a toral pencil $P(z)$ has a nonsemisimple unimodular eigenvalue, then the corresponding system is unstable. Consequently, Lyapunov stability requires both
\[ \rho(P(z))\leq1 \qquad (z\in\mathbb T^m) \]
and semisimplicity of every unimodular eigenvalue of every toral pencil. These conditions are not sufficient in general, however, and the complete stability classification on the critical spectral boundary remains open.

For simultaneously diagonalizable coefficient matrices, the problem reduces to scalar systems along the common eigendirections, leading to a complete classification in terms of the quantities
\[ \sum_{k=1}^m |a_{k,j}|. \]
This also yields simple sufficient conditions involving the individual spectral radii $\rho(A_k)$. The examples given in the paper show that such conditions do not extend to arbitrary matrix families: even when
\[ \sum_{k=1}^m\rho(A_k)<1, \]
a critical system can be unstable if a toral pencil has a defective unimodular eigenvalue.

Several natural questions remain for future work. One concerns whether attractivity alone implies Lyapunov stability in the bounded-boundary/Pringsheim framework considered here.  Establishing this implication would show that attractivity and asymptotic stability coincide for this class of systems. 

A second direction is the stability theory for general commuting coefficient families. The simultaneously diagonalizable case treated here suggests that the joint spectral structure should govern the strict stability regimes, while nilpotent parts of the critical joint blocks should determine the behavior on the spectral boundary. A complete characterization for arbitrary commuting families would extend the scalar reduction obtained in Section~6 and provide a natural next step toward understanding the critical regime.

More generally, the critical case for unrestricted matrix families remains the principal open problem. The results of this paper identify important necessary conditions and show that Jordan structure plays a central role, but they also demonstrate that pointwise spectral information about the pencil $P(z)$ does not by itself determine Lyapunov stability. A sharper structural understanding of this boundary regime therefore appears to be essential for a complete multidimensional stability theory.

\section*{Acknowledgments}

The authors would like to thank the anonymous referees for their careful reading of the manuscript and for several helpful comments and suggestions that improved the presentation of the paper.

\section*{Statements and Declarations}

\noindent\textbf{Funding.}
The authors received no funding for this work.

\medskip
\noindent\textbf{Competing interests.}
The authors declare that they have no competing interests.

\medskip
\noindent\textbf{Data availability.}
No datasets were generated or analyzed during the
current study. Data sharing is not applicable.

\end{document}